\newtheorem{assumption}{Assumption}
\newtheorem*{theorem*}{Theorem}
\def\bel{\begin{equation}\label}
\def\eeq{\end{equation}}
\def\LT{L^1_{[0,T]}}
\newtheorem{remark}{Remark}[section]
\newtheorem{definition}{Definition}[section]
\newtheorem{theorem}{Theorem}[section]
\newtheorem{lemma}[theorem]{Lemma}
\newtheorem{proposition}[theorem]{Proposition}
\newtheorem{corollary}[theorem]{Corollary}
\def\ds{\displaystyle}
\def\bega{\begin{array}}
\def\enda{\end{array}}
\def\bepmatrix{\begin{pmatrix}}
\def\enpmatrix{\end{pmatrix}}
\def\bel{\begin{equation}\label}
\def\eeq{\end{equation}}
\newcommand\ee{\end{equation}}
\def\benl{\begin{equation*}}
\def\eenl{\end{equation*}}
\def\forall{\hbox{for all }~}
\def\be{\begin{equation}}
\def\beq{\begin{equation}}
\def\bel{\begin{equation}\label}
\def\eeq{\end{equation}}
\newcommand\ba{\begin{array}}
\newcommand\ea{\end{array}}
\def\vs{\vskip 2em}
\def\begi{\begin{itemize}}
\def\endi{\end{itemize}}
\def\pr{\partial}
\def\d{\text{d}}
\def\forall{\hbox{for all}~}
\newcommand{\cR}{\mathbb{R}}
\newcommand{\tras}{^\top}
\newcommand{\intT}{\int_0^T }
\newcommand{\dtt}{\mathrm{d}t}
\def\A{\mathcal{A}}
\def\B{\mathcal{B}}
\def\C{\mathcal{C}}
\def\G{\mathcal{G}}
\def\N{\mathcal{N}}
\def\P{\mathcal{P}}
\def\T{\mathcal{T}}
\def\W{\mathcal{W}}
\def\ah{\hat{a}}
\def\uh{\hat{u}}
\def\wh{\hat{w}}
\def\xh{\hat{x}}
\def\xih{\hat\xi}
\def\etah{\hat\eta}
\def\eps{\varepsilon}
\title[Commutative impulsive systems]{Necessary conditions involving Lie brackets for impulsive optimal control problems;\\ the commutative case$^1$}
\begin{document}

\vs
\author[M.S. Aronna]{Maria Soledad Aronna}
\address{M.S. Aronna, Dipartimento di Matematica ,
Universit\`a di Padova\\ Padova  35121, Italy}
\email{aronna@math.unipd.it}

\author[F. Rampazzo]{Franco Rampazzo}
\address{M.S. Aronna, Dipartimento di Matematica ,
Universit\`a di Padova\\ Padova  35121, Italy}
\email{rampazzo@math.unipd.it}

\maketitle

\footnotetext[1]{This work is supported by the European Union under the 7th Framework Programme «FP7-PEOPLE-2010-ITN»  Grant agreement number 264735-SADCO.}


\begin{abstract}
In this article we study control problems with systems that are governed by ordinary differential equations whose vector fields depend linearly in the time derivatives of some components of the control. The remaining components are considered as classical controls. This kind of system is called `impulsive system'.
We assume that the vector fields multiplying the derivatives of each component of the control are commutative. 

We use the results in Bressan and Rampazzo \cite{BreRam91}  where it is shown that the impulsive system can be reduced to a classical system of ordinary differential equations via a transformation of variables. The latter is used to give a concept of solution of the impulsive differential equation.

In \cite{BreRam91} they also provide maximum principles for both the original and the transformed optimal control problems. From these principles, we derive new necessary conditions in term of the adjoint state and the Lie brackets of the data functions.
\end{abstract}

\section{Introduction}\label{SecPb}

In this article we investigate necessary optimality conditions for a Mayer governed by the system
\begin{align}
\label{E}&\dot x(t) = \tilde{f}(x(t),u(t),a(t)) + \ds\sum_{i=1}^m  \tilde g_i(x(t),u(t))\dot u^i(t), \\
\label{E0}&(x,u)(0)=(x_0,u_0),
\end{align}
where $t\in [0,T],$ $x(t) \in \cR^n,$ $u(t)  \in U \subset \cR^m$ and $a(t) \in A \subset \cR^l.$ The detailed hypothesis concerning the control sets and the vector fields are given in Assumptions \ref{AssComm} and \ref{Ass} afterwards. 
Consider the vector fields  $f,g_\alpha:\cR^{n}\times U\rightarrow \cR^{n+m}$  for $\alpha =1,\dots,m,$ defined by
\bel{fieldsgf}
f\doteq f^j\frac{\partial}{\partial x^j},\qquad g_\alpha\doteq g_\alpha^j \frac{\partial}{\partial x^j} +     \frac{\partial}{\partial z^\alpha}\,,
\eeq
where  we have adopted the Einstein summation convention. Latin indexes run from $1$ to $n$ and Greek indexed run from $1$ to $m.$ The columns representations of these vector fields  are 
\bel{gi}
f = \bepmatrix \tilde{f^1}\, \\.\\ \tilde{f^n}\\ \,\\0\\\, \enpmatrix,\qquad
g_\alpha
=
\bepmatrix
{\tilde g_\alpha^1}\\.\\ {\tilde g_\alpha^n}
\\\,\\
{\bf e_\alpha}\\\,
\enpmatrix,
\ee
where ${\bf e_\alpha}$  is the $\alpha-$th. element of the  canonical basis of $\cR^m.$ The following hypothesis holds true along all the article and is of main importance.

\begin{assumption}[Commutativity]
\label{AssComm}
We assume that the vector fields $g_\alpha$ {\it commute}, i.e. for every pair $\alpha,\beta=1,\dots,m,$
\bel{gicomm}
{[{g}_\alpha,{g}_\beta]=0,}
\ee
where $[{g}_\alpha,{g}_\beta]$ is the {\em Lie bracket} of  $g_\alpha$ and $g_\beta$ defined by
\benl
 [{g}_\alpha,{g}_\beta] \doteq
 \left(\frac{\partial {g}_\beta ^i}{\partial x^j}
 {g}_\alpha^i -  \frac{\partial {g}_\alpha^i}{\partial x^j}{g}_\beta ^j + \frac{\partial {g}_\beta^i}{\partial u^\alpha} -  \frac{\partial {g}_\alpha^i}{\partial u^\beta} \right)  \frac{\partial}{\partial x^i}.
\eenl
\end{assumption}
Notice that $[{g}_\alpha,{g}_\beta]$ has zero $z$-components, since  all the 
${g}_\alpha$ have constant $z$-components.
 The column representation of $[{g}_\alpha,{g}_\beta]$ is
\benl
\begin{pmatrix}
\ds\nabla_x \tilde{g}_\alpha\, \tilde{g}_\beta - \nabla_x \tilde{g}_\beta\, \tilde{g}_\alpha + \frac{\pr \tilde{g}_\alpha}{\pr u_\beta}  - \frac{\pr \tilde{g}_\beta }{\pr u_\alpha} \\
0
\end{pmatrix}.
\eenl

The main result of the present paper is the one stated next and it is proved in Section \ref{SecNC}. Assume for the moment that $p$ is the adjoint state and that it can be defined as in the classical framework.
\begin{theorem*}[Necessary conditions involving Lie brackets]
Let $(x^*,u^*,a^*)$ be optimal for $\P,$  and let $i=1,\dots m$ be an index.  Then the following statements hold.
\begin{itemize}
\item[(i)] Let $t \in [0,T)$ be any time such that there exists $\sigma>0$ sufficiently small such that $u^*(t) +\sigma {\bf e_i} \in U$ a.e. on $[t,T].$ Then
\be
\label{NCi}
p(t) \cdotp g_i (x^*(t),u^*(t)) \leq 0.
\ee
\end{itemize}
Furthermore, for a.a. $t\in  [0,T],$ it holds:
\begin{itemize}
 \item[(ii)] If there exists $\sigma_0>0$ sufficiently small such that $u^*(t) +\sigma {\bf e_i} \in U$ for all $\sigma \in [0,\sigma_0],$ then
\be
\label{NCii}
p(t) \cdotp [g_i,f] (x^*(t),u^*(t),a^*(t)) \geq 0.
\ee
\item[(iii)] If there exist $h\in \cR^m$ and $\sigma_0>0$ sufficiently small such that $u^*(t) \pm \sigma {\bf e_i} \in U$ for all $\sigma \in [0,\sigma_0],$ then
\be
\label{NCiii}
p(t) \cdotp \sum_{j,k=1}^m h_jh_k [g_j,[g_k,f] \,] (x^*(t),u^*(t),a^*(t)) \geq 0.
\ee
In other words,
\be
h^\top Qh \geq 0,
\ee
where $Q$ is a symmetric matrix with entries
\benl
Q_{jk}(t) := p(t) \cdotp [g_j,[g_k,f] \,] (x^*(t),u^*(t),a^*(t)) .
\eenl
\end{itemize}
Here all the Lie brackets are computed in the variable $(x,u).$
\end{theorem*}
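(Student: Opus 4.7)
The plan is to derive the three necessary conditions from the Pontryagin-type maximum principle of \cite{BreRam91} (applied to the classical reduction of \eqref{E} afforded by Assumption \ref{AssComm}) by testing optimality against suitably chosen needle variations of the impulsive control $u$. Each variation is engineered so that its leading contribution to the state perturbation lies, respectively, in the direction $g_i$, $[g_i,f]$, and the iterated brackets $[g_j,[g_k,f]]$, after which the adjoint transport furnishes the sign.

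For part (i), I would perturb by the \emph{permanent} impulse $u^*\mapsto u^*+\sigma e_i$ on $[t,T]$, which is admissible by hypothesis. In the extended $(x,u)$-system of \eqref{gi} this produces an instantaneous jump $\sigma\, g_i(x^*(t),u^*(t))+O(\sigma^2)$ at time $t$, hence a first-order cost variation proportional to $\sigma\,p(t)\cdot g_i(x^*(t),u^*(t))$; since only $\sigma\ge 0$ is admissible, optimality yields \eqref{NCi}.

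For part (ii), I would use a \emph{split} needle variation at a Lebesgue point $t$: add $+\sigma e_i$ at $\tau=t$, evolve with the shifted control on $[\tau,\tau+\epsilon]$, and undo the jump by $-\sigma e_i$ at $\tau+\epsilon$. In flow language this is $\Phi^{g_i}_{-\sigma}\circ\Phi^{f}_{\epsilon}\circ\Phi^{g_i}_{\sigma}$ applied to the state at $\tau$; a Baker--Campbell--Hausdorff expansion gives the state variation $-\sigma\epsilon\,[g_i,f](x^*,u^*,a^*)(t)+o(\sigma\epsilon)$, so pairing with $p$, dividing by $\sigma\epsilon>0$ and letting $\epsilon\to 0$ yield \eqref{NCii} for a.e.\ $t$. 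For part (iii), I would repeat the split variation in the direction $h\cdot g:=\sum_j h_j g_j$, admissible in both signs by hypothesis. Part (ii) applied with $\pm h$ first forces $p(t)\cdot[h\cdot g,f]=0$, so the first-order Lie-bracket contribution cancels and the leading term in the BCH expansion becomes the quadratic $\tfrac12\sigma^2\epsilon\sum_{j,k}h_jh_k[g_j,[g_k,f]]$, yielding $h^\top Q h\ge 0$. Symmetry of $Q$ follows from the Jacobi identity combined with $[g_j,g_k]=0$, which implies $[g_j,[g_k,f]]=[g_k,[g_j,f]]$.

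The main obstacle, in my view, is not the formal Lie-bracket algebra, which is clean, but the rigorous justification of the two-parameter $(\sigma,\epsilon)$ asymptotic expansion of the composed flow in the BV/impulsive setting of \cite{BreRam91}, together with the verification that the split variation is genuinely admissible for all small $\sigma,\epsilon$ and that the limits can be taken at a.e.\ $t$ despite the measurable dependence on the ordinary control $a^*$.
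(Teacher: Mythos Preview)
Your proof of (i) is essentially the paper's: you take $\nu\equiv e_i$ on $[t,T]$ and read off \eqref{NCi} from the impulsive maximum principle (condition \eqref{PMPcond2}).

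For (ii) and (iii), however, you take a genuinely different route from the paper. The paper does \emph{not} perform two-parameter BCH expansions of composed flows in the original coordinates. Instead it works in the flow-box coordinates of Lemma~\ref{flowbox}, where $G_i=\partial/\partial u_i$, so that the classical Pontryagin minimum condition \eqref{TPMPcond1} for the transformed problem $\P'$ reads simply as a pointwise minimum of $u\mapsto \pi_1(t)\cdot\tilde F(\xi^*(t),u,a^*(t))$ over $U$. A first-order Taylor expansion in $u$ at $u^*(t)$ gives $\pi_1\cdot\partial_{u_i}\tilde F\ge 0$, and the identity $\partial_{u_i}F=[G_i,F]$ (trivial in straightened coordinates) turns this into \eqref{NCii} after pulling back. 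For (iii), the two-sided hypothesis kills the first-order term, and a second-order Taylor expansion in $u$ gives \eqref{NCiii} via $\partial^2_{u_j u_k}F=[G_j,[G_k,F]]$. The BCH algebra you describe is thus replaced by ordinary calculus in the good chart; all the ``obstacles'' you list (rigorous two-parameter asymptotics, measurable $a^*$, admissibility of the split variation) simply disappear, because \eqref{TPMPcond1} is already a pointwise statement holding for all $u\in U$ at a.e.\ $t$.

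There is also a concrete gap in your approach to (ii). The hypothesis in (ii) only asserts $u^*(t)+\sigma e_i\in U$ at the \emph{single} time $t$; it says nothing about $u^*(\tau)+\sigma e_i$ for $\tau\in(t,t+\epsilon)$. Your split needle variation requires the perturbed control to remain in $U$ on the whole interval $[t,t+\epsilon]$, which is not guaranteed even at a Lebesgue point (for $u^*\in L^1$, nearby values of $u^*(\tau)$ need not be close to $u^*(t)$, and may sit on $\partial U$ in the $e_i$ direction). The paper's argument needs only the pointwise hypothesis because \eqref{TPMPcond1} already allows one to plug in an arbitrary $u\in U$ at the fixed time $t$. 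So either you must strengthen the hypothesis of (ii) to an interval condition, or you should switch to the transformed-problem argument.
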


There is a wide literature concerning impulsive control systems, and many different approaches can be identified. In \cite{Ris65} Rishel derived necessary conditions for a problem with a scalar positive Radon measure as control, and in which the trajectories are of bounded variation. In order to deal with the impulsive differential equations, Rishel used the technique of `graph completion' that was formalized later by Bressan and Rampazzo in \cite{BreRam88}. In the latter article they dealt also with vector controls. The method of graph completion wa employed to obtain optimality conditions in Silva-Vinter \cite{SilVin97}, Pereira-Silva \cite{PerSil00}, Miller \cite{Millerbook}, Arutyunov et al. \cite{AruKarPer12}, among many others. 

Here we consider problems that admit trajectories of unbounded variation. More  precisely, the impulsive controls are no longer taken in the space of bounded variation, but in $L^1.$ The concept of solution we use is the one given in Bressan \cite{Bre87} for the scalar control case and \cite{BreRam91} for the vector case. They used a change of coordinates to transform the original system into a simple one where that could be regarded as a classical differential equation. A similar procedure has been used independently by Dykhta in \cite{Dyk94}.
We extend the Maximum Principle in \cite{BreRam91} to a formulation that includes a classical bounded control, and we obtain some higher order necessary conditions in terms of Lie bracket of the data functions.

The article is organized as follows. In Section \ref{SecSOL} we present the main assumptions, the definition of solution of an impulsive system and some properties. In Section \ref{SecTP} we define the transformed optimal control problem. In Section \ref{SecAE} we analyze the impulsive adjoint equation. We present a maximum principle in \ref{SecTPMP} and we derived necessary condition in \ref{SecNC}.


\section{A notion of solution }\label{SecSOL}

Consider the system \eqref{E}-\eqref{E0} written in its augmented form
\begin{align}
\label{AS}
&\begin{pmatrix}
\dot{x}\\
\dot{z}
\end{pmatrix}
= f(x,z)+ \sum_{i=1}^m \dot u^i\, {g}_i(x,z),\\
\label{AS0}&(x,z)(0)=(x_0,u_0),
\end{align}
where we added the dependent variable $z$ for the sake of simplicity in the presentation that follows.

\begin{assumption}[on the vector fields and control sets] 
\label{Ass}
\begin{itemize}
 \item[(i)] $U\subseteq \cR^m$ is the closure of a connected open set.
 \item[(ii)] $A\subseteq \cR^l$ is compact.
 \item[(iii)] For every $a\in A$, ${f}(\cdot,\cdot,a):\cR^n\times U\to\cR^n\times U$ is locally Lipschitz; and
 for every $(x,u)\in\cR^m,$ one has that ${f}(x,u,\cdot):A\to\cR^n\times U$ is continuous.
 \item[(iv)] There exists $M>0$ such that $|f(x,u,a)| \leq  M(1+|(x,u)|),$ for every $(x,u) \in \cR^n\times U.$       
\end{itemize}
\end{assumption}

This assumption guarantees the existence and uniqueness of the solution of the Cauchy problem \eqref{AS}-\eqref{AS0} for any initial condition $x_0\in\cR^n,$ 
$u\in \C^1(0,T;U)$ and $a\in L^1(0,T;A).$
Moreover, for every $M>0,$ there exists $N>0$ such that if $|x_0|+\| u\|_\infty<M$ then $\|x({x_0},u,a)\|_{\infty}<N.$  Here $x(x_0,u,a)$ denotes the unique solution of \eqref{AS}-\eqref{AS0} associated to $(x_0,u,a).$

Now we aim to give a definition of solution of the Cauchy problem \eqref{AS}-\eqref{AS0} for controls $u\in L^1.$  But in $L^1$ it can occur that two functions $u$ and $v$ are the same but $u(0)\neq v(0),$ and hence, special attention has to be payed. With this end, we introduce the concept of {pointwise defined} in the definition below. We consider not only the time $t=0,$ but any subset of $ [0,T].$

\begin{definition}
\begin{itemize}
\item[(i)] Let $I\subseteq [0,T]$. We say that two measurable maps $z,y:[0,T]\to\cR^d$ are {\em $I$-equivalent} if they coincide on every point of $I$ and almost everywhere on $[0,T] \backslash I.$
The class of equivalence of such maps is referred as {\em  pointwise defined on $I.$} 
We can identify a class of equivalence with one of its representatives as in the standard case.
\item[(ii)]
 If $F\subseteq\cR^d$, we use $L_I^1(0,T; F)$  to denote the subset of Lebesgue-integrable maps which are {\em pointwise defined} on $I$ and take values in $F$.
We say that a sequence $(y_k)\subset L_I^1$ {\em converges in $L_I^1$} to $y,$  if $y_k\to y$ in $L^1$ and $y_k(t)\to y(t)$ for every $t\in I.$
\end{itemize}
\end{definition} 

\begin{definition}[Generalized solutions pointwise defined on a subset $I$]
\label{DefGSpd}
Let $I\subseteq[0,T],$ with $0\in I.$ Consider $x_0\in \cR^n,$ and controls $u \in L^1_I(0,T;U)$ and $a\in L^1(0,T;A).$ A function $t\mapsto (x(t),z(t))$ of class $L^1_I$  is a {\em solution of \eqref{AS} pointwise defined on $I$} corresponding to the input $(x_0,u,a)$ if there exists a sequence of controls $(u_k)\subset \C^1([0,T],U)$ such that the functions $(x(a,u_k,\cdot),z_k(\cdot))$ have uniformly bounded values and converge  to $(x,u)$ in  $L^1_I.$ 
\end{definition}

In what follows we prove that the Definition \ref{DefGSpd} is a good definition. In other words, we show that under the Assumptions \ref{AssComm} and \ref{Ass} there is a unique generalized solution in $L^1$ of the Cauchy problem \eqref{AS}-\eqref{AS0} for each initial condition $(x_0,u_0)$ and control $(u,a)\in L^1\times L^1.$ Moreover, we prove that this concept of solution is robust, i.e. it is continuous with respect to the initial conditions and the controls $u$ and $a.$

The technique is introducing a diffeomorphism that transforms the equation \eqref{TS} into one where the impulsive part $\dot{u}$ has constant coefficients. Afterwards, the results of existence, uniqueness and continuity are proved for this simpler transformed case. Finally, it is shown that the same result holds for the general equation \eqref{E} by transformation.

\subsection{A Change of Coordinates}

Let us introduce a change of coordinate $\phi$ in the product space $\cR^n\times U$ that sends each vector field $g_\alpha$ into the  vector field $\ds\frac{\pr}{\pr z_\alpha}.$ This has the advantage that in the resulting system the derivative $\dot{u}$ multiplies constant vector fields.

For every $j=1,\dots,n,$ let $\varphi^j:\cR^{n}\times U \rightarrow \cR$ be given by
\be
\varphi^j(x,z) \doteq
\mathrm{Pr}^j \Big( \exp \left({- z^{k} g_{k}}\Big)
(x,z)\right),
\ee
where $\mathrm{Pr}^j : \cR^{n+m}\rightarrow \cR$ denotes the canonical projection on the $j-$th. coordinate. Set $\varphi \doteq(\varphi^1,\dots \varphi^n)$ and  consider the map $\phi:\cR^{n}\times U\rightarrow \cR^{n}\times U$ defined by
\be
\phi(x,z) \doteq (\varphi(x,z),z).
\ee

\begin{lemma}
The mapping $\phi$ is a diffeomorphism of $\cR^{n}\times U$ into itself. 
\end{lemma}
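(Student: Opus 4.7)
The plan is to construct the inverse of $\phi$ explicitly using the joint flow of the commuting vector fields $g_1,\dots,g_m$, and then read off smoothness of both maps directly from smoothness of the flow.

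First I would invoke the classical theorem on commuting flows: since $[g_\alpha,g_\beta]=0$ for all $\alpha,\beta$ by Assumption \ref{AssComm}, the one-parameter flows $\Phi^\alpha_s := \exp(s g_\alpha)$ commute pairwise, and the multi-time flow
$$\Phi_z := \exp(z^1 g_1)\circ\cdots\circ\exp(z^m g_m)$$
is well defined independent of the order of composition, is jointly smooth in $(z,x,w)$, and satisfies the group law $\Phi_{z+z'}=\Phi_z\circ\Phi_{z'}$. A central observation is that, because the $z$-component of $g_\alpha$ is the constant vector $\mathbf{e}_\alpha$, the $z$-component of the flow evolves linearly: the $z$-part of $\Phi_z(x,w)$ is simply $w+z$. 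In particular
$$\Phi_{-z}(x,z) = \bigl(\varphi(x,z),\,0\bigr),$$
so $\phi(x,z)=(\varphi(x,z),z)$ is literally ``flow back to the slice $\{z=0\}$, then restore $z$.''

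Next I would define the candidate inverse
$$\psi(y,z) := \bigl(\mathrm{Pr}^{1,\dots,n}\Phi_z(y,0),\,z\bigr),$$
i.e.\ ``sit at $(y,0)$ and flow forward by $z$.'' Verifying $\phi\circ\psi=\mathrm{id}$ is then a direct computation: starting from $(y,0)$, the point $\Phi_z(y,0)$ has $z$-coordinate exactly $z$, so applying $\Phi_{-z}$ to it returns to $(y,0)$ by the group law, and projecting on the first $n$ coordinates gives $y$. The converse identity $\psi\circ\phi=\mathrm{id}$ is symmetric. Since $\phi$ and $\psi$ are compositions of the smooth joint flow $\Phi$, of projections, and of the identity in $z$, both maps are $C^\infty$ (or at least $C^1$, matching the regularity of the $g_\alpha$), so $\phi$ is a diffeomorphism of $\mathbb{R}^n\times U$ onto itself.

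The only delicate point I would flag is the global existence of the flow $\Phi_z$ on $\mathbb{R}^n\times U$ for all admissible $z$. The $z$-component is trivial (linear), so completeness reduces to the $x$-components $\tilde g_\alpha^j(x,z)$ generating globally defined trajectories while keeping the $z$-component inside $U$; under the standing Lipschitz/growth framework of Assumption \ref{Ass}, combined with the fact that $U$ is the closure of a connected open set, this is the main technical check, after which the algebraic verification above goes through unchanged.
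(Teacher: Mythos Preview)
The paper leaves this proof empty, so there is nothing to compare against directly. Your approach---building the explicit inverse $\psi(y,z)=\bigl(\mathrm{Pr}^{1,\dots,n}\Phi_z(y,0),\,z\bigr)$ from the joint flow and reading off smoothness---is the standard flow-box argument and is the natural way to fill the gap. The key observation that the $z$-component of $\Phi_{-z}(x,z)$ lands at $0$ (because the last $m$ components of each $g_\alpha$ are the constant $\mathbf{e}_\alpha$) is exactly what makes the construction work, and your verification of $\phi\circ\psi=\psi\circ\phi=\mathrm{id}$ via the group law is correct.

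Two small caveats on the ``delicate point'' you flag. First, Assumption~\ref{Ass} imposes Lipschitz and linear-growth conditions only on $f$, not on the $\tilde g_\alpha$; so you cannot cite it for completeness of the $g_\alpha$-flows in the $x$-variable. Some separate regularity/growth hypothesis on the $\tilde g_\alpha$ (implicit in the paper, and present in \cite{BreRam91}) is what actually does the work. Second, ``$U$ is the closure of a connected open set'' does not by itself guarantee that the straight segment $\{(1-s)z:0\le s\le 1\}$ traced by the $z$-component of $\Phi_{-z}(x,z)$ stays inside $U$; for the definition of $\varphi$ to make sense as written one effectively needs $0\in U$ and $U$ star-shaped with respect to $0$ (or the $\tilde g_\alpha$ extended beyond $U$). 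These are issues with the paper's standing hypotheses rather than with your argument, but you should not claim they are resolved by Assumption~\ref{Ass}.
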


\begin{proof}

\end{proof}

\vs

The vector fields change with the differential of $\phi,$ i.e. for each $\alpha=1,\hdots,m,$ the transformed $f$ and $g_\alpha$ at $(\xi,\eta)=\phi(x,z)$ are given by
\be
F(\xi,\eta,v):= \nabla \phi (x,z) \, f(x,z,v), \quad
G_\alpha(\xi,\eta):= \nabla \phi (x,z) \, g_{\alpha}(x,z).
\ee

\begin{lemma}
\label{flowbox}
For every $\alpha=1,\dots, m$ one has 
\be
{G}_{\alpha}
= \frac{\pr }{\pr z_\alpha}.
\ee
\end{lemma}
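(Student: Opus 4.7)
The plan is to exploit the classical rectification theorem for commuting vector fields: since the $g_\alpha$ commute, the map $\phi$ is essentially the inverse of their joint flow, and joint flows straighten out the generating fields to constant coordinate fields. The proof amounts to making this precise using integral curves.

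First I would reinterpret $\varphi$ in flow-theoretic language. By definition, $\exp(-\sum_k z^k g_k)$ is the time-$1$ map of the vector field $-\sum_k z^k g_k$, with $z$ treated as a fixed parameter. Denote by $\Phi^k_s$ the flow of $g_k$ at time $s$. From Assumption~\ref{AssComm} and the standard fact that complete commuting vector fields have commuting flows, one obtains the product formula
$$
\exp\Bigl(-\sum_{k=1}^m z^k g_k\Bigr)=\Phi^1_{-z^1}\circ\Phi^2_{-z^2}\circ\cdots\circ\Phi^m_{-z^m}.
$$
Since $g_k$ has constant $z$-component $\mathbf{e}_k$, applying $\Phi^k_{-z^k}$ subtracts $z^k$ from the $z^k$-coordinate. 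Hence $\exp(-\sum_k z^k g_k)(x,z)=(\varphi(x,z),0)$, and $\phi(x,z)=(\varphi(x,z),z)$ is a diffeomorphism whose second factor is the identity.

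Next, to compute $d\phi(g_\alpha(x,z))$, I would use the integral curve $\gamma(s):=\Phi^\alpha_s(x,z)$, which satisfies $\gamma(0)=(x,z)$ and $\dot\gamma(0)=g_\alpha(x,z)$. Its $z$-component equals $z+s\mathbf{e}_\alpha$, so the second factor of $\phi(\gamma(s))$ is $z+s\mathbf{e}_\alpha$. For the first factor, the product formula together with commutativity of the flows gives
$$
\exp\Bigl(-\sum_k(z+s\mathbf{e}_\alpha)^k g_k\Bigr)\bigl(\Phi^\alpha_s(x,z)\bigr)
=\Phi^1_{-z^1}\circ\cdots\circ\bigl(\Phi^\alpha_{-z^\alpha-s}\circ\Phi^\alpha_s\bigr)\circ\cdots\circ\Phi^m_{-z^m}(x,z),
$$
and the inner bracket collapses to $\Phi^\alpha_{-z^\alpha}$, so the whole expression reduces to $\exp(-\sum_k z^k g_k)(x,z)$. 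Consequently $\varphi(\gamma(s))=\varphi(x,z)$ is independent of $s$, and
$$
\phi(\gamma(s))=(\varphi(x,z),\,z+s\mathbf{e}_\alpha).
$$
Differentiating at $s=0$ yields $d\phi(x,z)\cdot g_\alpha(x,z)=(0,\mathbf{e}_\alpha)$, which is exactly $\partial/\partial z^\alpha$ evaluated at $\phi(x,z)$. This establishes $G_\alpha=\partial/\partial z^\alpha$.

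The main delicate point is the justification of the product decomposition of $\exp(-\sum_k z^k g_k)$ into a composition of individual flows $\Phi^k_{-z^k}$; this is the single place where the commutativity hypothesis \eqref{gicomm} is used in an essential way, via the classical theorem that pairwise commuting vector fields have pairwise commuting flows. Once this is granted, together with the observation that $g_\alpha$ has constant unit $z^\alpha$-component, every other step is a direct computation with integral curves and the chain rule.
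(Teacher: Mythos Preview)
Your argument is correct and is precisely the standard flow-box computation for commuting vector fields. The paper itself does not supply a proof: it simply refers to Lemma~2.1 of \cite{BreRam91} and, in the remark following the statement, identifies the result with the Simultaneous Straightening-Out (Flow-box) Theorem, citing \cite{AMR88} and \cite{Lang}. Your proof is exactly the argument behind that classical theorem, specialized to the particular diffeomorphism $\phi(x,z)=(\varphi(x,z),z)$ built from $\exp(-z^k g_k)$, so you are filling in what the paper leaves to the literature.

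One small remark: you invoke ``complete commuting vector fields'' to justify that the flows $\Phi^k$ commute and that the product decomposition $\exp(-\sum_k z^k g_k)=\Phi^1_{-z^1}\circ\cdots\circ\Phi^m_{-z^m}$ holds. Completeness is not quite guaranteed by the standing assumptions (the $g_\alpha$ live on $\cR^n\times U$ with $U$ merely the closure of a connected open set), but what you actually need is only local commutativity of the flows wherever they are defined, which follows from $[g_\alpha,g_\beta]=0$ without any completeness hypothesis. The paper tacitly assumes the relevant flow maps exist by asserting that $\phi$ is a diffeomorphism of $\cR^n\times U$ onto itself; once that is granted, your integral-curve computation goes through verbatim.
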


\begin{proof} See Lemma 2.1 in \cite{BreRam91}.

\end{proof}

\begin{remark}
Lemma \ref{flowbox} just stated is actually the Simultaneous Straightening Out Theorem for commutative vector fields. The latter states that one can find a change of coordinates that transform a finite family of commutative smooth vector fields into constant vector fields. It is also known as Flow-box Theorem (see e.g. Abraham et al. \cite{AMR88} or Lang \cite{Lang}).
\end{remark}

On the other hand, notice that the last $m$ components of $F$ are zero. More precisely, $F$ can be written as $F=\begin{pmatrix} \tilde{F} \\ 0 \end{pmatrix}$ with
\be
\tilde F =\left( \frac{\pr \varphi^i}{\pr x^j} \tilde{f}^j \right) \frac{\pr}{\pr x^i}.
\ee
Consider hence the differential equation 
\be
\label{TS}
\begin{split}
 \dot\xi(t) &= \tilde{ F}(\xi(t),\eta(t),a(t)), \\
\dot\eta(t) &= \dot u(t),
\end{split}
\ee
with the initial conditions
\be
\label{TS0}
\begin{split}
 \xi(0) &= \varphi(x_0,u_0), \\
 \eta(0) &=u_0.
\end{split}
\ee

The following lemma states the equivalence of the system \eqref{AS}-\eqref{AS0} and the system obtained after the change of coordinates $\phi$ for measurable $a$ and smooth $u.$

\begin{lemma}[Equivalence of the equations for smooth $u.$]
\label{equivC1}
Let $(x,z,u,a)$ be a solution of the Cauchy problem \eqref{AS}-\eqref{AS0} with $u\in \C^1(0,T;U)$ and $a \in L^1(0,T;A).$
Then $(\xi,\eta,u,a)$ with
\bel{xieta}
(\xi,\eta)(t):= \phi(x(t),z(t))
\ee
is solution of \eqref{TS}-\eqref{TS0}.
Conversely, if $(\xi,\eta,u,a)$ is solution of \eqref{TS}-\eqref{TS0} with $u\in \C^1(0,T;U)$ and $a \in L^1(0,T;A),$  then $(x,z,u,a)$ is given by
\bel{transf}
(x(t),z(t)):= \phi^{-1}(\xi(t),\eta(t))
\ee
is solution of \eqref{AS}-\eqref{AS0}.
\end{lemma}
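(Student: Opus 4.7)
My plan is to verify both directions by a direct chain-rule computation, using Lemma \ref{flowbox} to eliminate the coefficients of $\dot u$ in the transformed drift.

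Because $\phi(x,z)=(\varphi(x,z),z)$, the second block of $(\xi,\eta)=\phi(x,z)$ is simply $\eta=z$. Combined with the second line of \eqref{AS}, namely $\dot z=\dot u$, this immediately gives $\dot\eta=\dot u$, which is the second equation of \eqref{TS}, and the initial condition $\eta(0)=u_0$ follows from \eqref{AS0}. So the $\eta$-component is essentially free.

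For the $\xi$-component I would differentiate $\xi(t)=\varphi(x(t),z(t))$ using the chain rule,
\[
\dot\xi=\nabla_x\varphi(x,z)\,\dot x+\nabla_z\varphi(x,z)\,\dot z,
\]
substitute $\dot x=\tilde f(x,u,a)+\sum_\alpha \tilde g_\alpha(x,u)\dot u^\alpha$ and $\dot z=\dot u$, and regroup to obtain
\[
\dot\xi=\nabla_x\varphi\,\tilde f+\sum_{\alpha=1}^m\bigl[\nabla_x\varphi\,\tilde g_\alpha+\nabla_z^\alpha\varphi\bigr]\dot u^\alpha.
\]
The crucial step is to kill the bracketed sum. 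Using the block form of $\nabla\phi$ together with the column representation \eqref{gi} of $g_\alpha$, the identity $G_\alpha=\nabla\phi\,g_\alpha=\partial/\partial z_\alpha$ furnished by Lemma \ref{flowbox} reads, in its $\xi$-component, exactly $\nabla_x\varphi\,\tilde g_\alpha+\nabla_z^\alpha\varphi=0$ for every $\alpha$. Substituting this back leaves $\dot\xi=\nabla_x\varphi\,\tilde f$, which is precisely $\tilde F(\xi,\eta,a)$ after expressing $(x,z)=\phi^{-1}(\xi,\eta)$. The initial condition $\xi(0)=\varphi(x_0,u_0)$ is inherited from \eqref{AS0}.

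For the converse, I would start from a solution $(\xi,\eta,u,a)$ of \eqref{TS}-\eqref{TS0}, set $(x,z)=\phi^{-1}(\xi,\eta)$, differentiate in $t$ using the chain rule for $\phi^{-1}$, and invoke the same Lemma \ref{flowbox} identity to recognise the right-hand side of \eqref{AS}. The hypothesis $u\in\mathcal{C}^1$ is used only to legitimise the pointwise differentiations, which is precisely why the more delicate Definition \ref{DefGSpd} is needed to extend the notion of solution to $L^1$ controls.

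The main obstacle I anticipate is purely notational: keeping the index bookkeeping straight when unpacking $G_\alpha=\partial/\partial z_\alpha$ into the componentwise identity $\nabla_x\varphi\,\tilde g_\alpha+\nabla_z^\alpha\varphi=0$. Once that identity is isolated, both implications collapse to essentially the same two-line chain-rule calculation, so there is no genuine analytic difficulty beyond what is already encoded in Lemma \ref{flowbox} and the invertibility of $\phi$.
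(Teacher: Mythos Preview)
Your proposal is correct and is exactly the argument the paper has in mind: the paper's proof is the single line ``the result follows immediately from the definition of $F$ and $G_i$,'' and your chain-rule computation together with Lemma~\ref{flowbox} is precisely the unpacking of that sentence. There is no difference in approach, only in the level of detail.
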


\begin{proof} 
The result follows immediately from the definition of $F$ and $G_i.$
\end{proof}

Observe now that in \eqref{TS} the impulsive part appears with a constant coefficient equal to 1. Hence, for every $u\in L^1,$ \eqref{TS} can be regarded as a classical differential equation by simple integration. More precisely, we consider \benl
 \eta=u, \quad
 \xi(t) = \xi(0) + \int_0^t \tilde{F} (\xi(s),\eta(s),a(s)) {\rm d}s.
\eenl

\begin{theorem}[Robustness for smooth $u$]
\label{ContTS}
\begin{itemize}
\item[(i)] The function 
\benl
a(\cdot)\mapsto \xi({\xi_0},u,a)(\cdot)
\eenl
 is continuous from $L^1(0,T;A)$ to $L^1(0,T;\cR^n),$ for each $x_0\in \cR^n,$ $u\in \C^1 (0,T;U).$
\item[(ii)] For $r>0$ and $\W\subset \C^1 (0,T;U),$ let  $K'\subset \cR^n$ such that the trajectories $\xi(\xi_0,u,a)$ have values inside $K'$ for all $\xi_0\in B_r(0),u\in \W,a\in L^1(0,T;A).$ Then there exists $M>0$ such that for every $\xi_0,\hat\xi_0 \in B_r(0)$ and $u,\uh \in \W,$
\be
\begin{split}
&|\xi(\xi_0,u,a)(\tau)-\xi(\hat\xi_0,\uh,a)(\tau)| + \int_0^T |\xi(\xi_0,u,a)(t)-\xi(\hat\xi_0,\uh,a)(t)| \dtt \\
&\leq
M\big[ |\xi_0-\hat\xi_0| + |u(0)-\uh(0)| + |u(\tau)-\uh(\tau)|
+\intT |u(t)-\uh(t)|\dtt
\big],
\end{split}
\ee
uniformly in $a \in L^1(0,T;A).$
\end{itemize}

\end{theorem}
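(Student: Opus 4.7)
The plan is to exploit the change of coordinates of Lemma~\ref{flowbox}: in \eqref{TS} the variable $\eta = u$ enters as an ordinary control rather than multiplying a derivative, so the equation is a standard (non-impulsive) ODE in $\xi$ with parameters $(u,a)$. Assumption~\ref{Ass}(iii) guarantees that $\tilde F(\xi,\eta,a)$ is locally Lipschitz in $(\xi,\eta)$ uniformly in $a\in A$; once all trajectories stay inside a single compact set, one Lipschitz constant $L$ controls everything and both claims reduce to Gronwall's inequality.

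For (ii), I write $\xi$, $\xih$ for the trajectories $\xi(\xi_0,u,a)$, $\xi(\xih_0,\uh,a)$, subtract their integral equations, and use the uniform Lipschitz bound on the relevant compact set to obtain
\begin{equation*}
|\xi(\tau) - \xih(\tau)| \leq |\xi_0 - \xih_0| + L\int_0^\tau |\xi(t)-\xih(t)|\,\dtt + L\intT |u(t)-\uh(t)|\,\dtt.
\end{equation*}
Gronwall then yields $|\xi(\tau)-\xih(\tau)| \leq e^{LT}\big(|\xi_0-\xih_0| + L\intT |u(t)-\uh(t)|\,\dtt\big)$, and integrating this bound in $\tau$ over $[0,T]$ produces the $L^1$ term on the left of the claimed inequality. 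The nonnegative quantities $|u(0)-\uh(0)|$ and $|u(\tau)-\uh(\tau)|$ on the right are simply added at no cost; I suspect they are listed in the statement because they become essential once the estimate is later pushed to $L^1$ controls pointwise defined on particular subsets.

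For (i), I fix $\xi_0$ and $u\in \C^1(0,T;U)$, take $a_k \to a$ in $L^1(0,T;A)$, and denote the corresponding trajectories by $\xi_k$ and $\xi$. Subtracting the integral equations and using the Lipschitz bound only in the first argument yields
\begin{equation*}
|\xi_k(\tau) - \xi(\tau)| \leq L\int_0^\tau |\xi_k(t)-\xi(t)|\,\dtt + \intT |\tilde F(\xi(t),u(t),a_k(t)) - \tilde F(\xi(t),u(t),a(t))|\,\dtt.
\end{equation*}
Passing to a sub-subsequence along which $a_k \to a$ almost everywhere, the continuity of $\tilde F(x,u,\cdot)$ from Assumption~\ref{Ass}(iii) together with uniform boundedness of $\tilde F$ on the fixed compact set lets dominated convergence send the second term to zero. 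Gronwall then upgrades this to uniform, hence $L^1$, convergence of $\xi_k$ to $\xi$ along that sub-subsequence; the standard subsequence principle (every subsequence has a further sub-subsequence converging to the same limit) then gives convergence of the full sequence in $L^1$.

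The one genuinely technical point is trapping all the trajectories inside a common compact set so that a single Lipschitz constant works. In (ii) this is given by hypothesis; in (i) it follows from the linear growth in Assumption~\ref{Ass}(iv) together with boundedness of $\xi_0$ and of $u\in \C^1([0,T];U)$, via a standard a priori Gronwall estimate. Everything after that is classical.
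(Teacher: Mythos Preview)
Your proof is correct, but the paper takes a different route. Rather than Gronwall, it casts the solution as the fixed point of a Picard-type operator $\chi$ acting on the pair $(\xi,\eta)$, equipped with the exponentially weighted norm
\[
\|\omega\|_X := \frac{e^{-4TL}}{4L}\,|\omega(\tau)| + \int_0^T e^{-4tL}|\omega(t)|\,\dtt,
\]
and then invokes a parametrized Banach--Caccioppoli lemma (Lemma~\ref{BC}). The weight makes $\chi$ a $\tfrac14$-contraction on the whole interval in one step; continuous dependence on the data $(\xi_0,\eta_0,u,a)$ then drops out of part~(b) of that lemma, and the particular shape of $\|\cdot\|_X$ --- a point evaluation at $\tau$ plus an $L^1$ integral --- is exactly what manufactures the left-hand side of the estimate and brings in the terms $|u(0)-\uh(0)|$ and $|u(\tau)-\uh(\tau)|$ on the right (they come from the $\eta$-component of $\chi$). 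Your direct Gronwall argument is the classical, more elementary equivalent: it produces the inequality with a constant of order $e^{LT}(1+T)$, and, as you rightly observe, the two pointwise terms can simply be appended at no cost. The paper's fixed-point packaging has the mild advantage of handling (i) and (ii) within a single framework, whereas you treat (i) separately via dominated convergence plus the subsequence principle; both are perfectly valid and the underlying analysis is the same.
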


In the proof of Theorem \ref{ContTS} we use the following result, which is a consequence of the Banach-Caccoppoli's 
\begin{lemma}
\label{BC}
Let $X$ be a Banach space, $M$ a metric space, $\Phi:M\times X \to X$ be a continuous function such that
\be
\label{BCeq1}
\| \Phi(m,x) - \Phi(m,y) \| \leq L \|x-y\|,\quad \forall m\in M,\ x,y\in X,
\ee
with $L<1.$ Then the following assertions hold.
\begin{itemize}
\item[(a)] For every $m\in M,$ there exists a unique $x(m)$ such that
\be
x(m) = \Phi(m,x(m)).
\ee
\item[(b)] The map $m\mapsto x(m)$ is continuous, and one has
\be
\| x(m)-x(m') \| \leq \frac{1}{1-L} \| \Phi(m,x(m)) - \Phi(m',x(m')) \|.
\ee
\end{itemize}
\end{lemma}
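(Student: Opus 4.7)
The plan is to deduce part (a) directly from the Banach--Caccioppoli contraction mapping principle, and then to establish part (b) by combining the two fixed-point identities with the uniform contraction estimate \eqref{BCeq1}.

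For part (a), I would fix $m \in M$ and note that by \eqref{BCeq1} the map $x \mapsto \Phi(m, x)$ is an $L$-Lipschitz self-map of the Banach space $X$ with $L < 1$, hence a strict contraction on a complete metric space. Banach's theorem then produces a unique fixed point, which is by definition $x(m)$.

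For part (b), I would pick $m, m' \in M$, write $x = x(m)$ and $x' = x(m')$, and use both fixed-point identities to rewrite
\[
x - x' = \Phi(m, x) - \Phi(m', x') = \bigl[\Phi(m, x) - \Phi(m', x)\bigr] + \bigl[\Phi(m', x) - \Phi(m', x')\bigr].
\]
Applying the triangle inequality to this decomposition and the contraction estimate \eqref{BCeq1} to the second bracket yields
\[
\|x - x'\| \leq \|\Phi(m, x) - \Phi(m', x)\| + L\,\|x - x'\|,
\]
and since $L < 1$ this rearranges to
\[
\|x(m) - x(m')\| \leq \frac{1}{1-L}\,\|\Phi(m, x(m)) - \Phi(m', x(m))\|,
\]
which is the informative form of the stated bound. (The right-hand side as printed in the lemma apparently contains a typo, since $\Phi(m', x(m'))$ equals $x(m')$ and would make the inequality tautological.) Continuity of $m \mapsto x(m)$ then follows by letting $m' \to m$ in this estimate and invoking continuity of $\Phi(\cdot, x(m))$ at the point $m$, which is guaranteed by the joint continuity hypothesis on $\Phi$.

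No genuine obstacle is anticipated: this is the standard parameter-dependent contraction mapping theorem, and the one step carrying any content is the algebraic rearrangement that isolates the factor $(1-L)$ on the left-hand side.
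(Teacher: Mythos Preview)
Your proposal is correct and matches the paper's approach: the paper does not actually prove this lemma but merely records it as ``a consequence of the Banach--Caccioppoli'' theorem, and your argument is precisely the standard derivation from that principle. Your observation about the apparent typo in the right-hand side of the displayed estimate (where $\Phi(m',x(m'))$ should read $\Phi(m',x(m))$) is also well taken.
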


\begin{proof}

[of Theorem \ref{ContTS}] 
Assume for the moment that $F$ is globally Lipschitz in $(\xi,\eta)$ with constant $L.$ 
Fix $\tau \in [0,T],$ and consider the mapping 
\be
\begin{split}
\chi&(\xi_0,\eta_0,u,a,\xi,\eta):= \\
&\begin{pmatrix} \xi_0 \\ \eta_0 \end{pmatrix}
+ \int_0^t F(\xi(s),\eta(s),a(s)) {\rm d} s 
+\sum_{\alpha=1}^m [u^\alpha(t) - u^\alpha(0)]{\bf e}_{n+\alpha},
\end{split}
\ee
for $(\xi_0,\eta_0,u,a)\in M:=\cR^{n+m}\times \C^1 (0,T;U)\times L^1(0,T;A)$ and $(\xi,\eta) \in X:= \C^1(0,T;\cR^n)\times  \C^1(0,T;U)$
with the norm
\be
\| \omega \|_X:=
\frac{e^{-4TL}}{4L} |\omega(\tau)|
+\intT e^{-4tL}|\omega(t)|\dtt.
\ee
Observe that if $(\xi,\eta)=\chi(\xi_0,\eta_0,u,a,\xi,\eta),$ then $(\xi,\eta)$ is solution of \eqref{TS}-\eqref{TS0} with initial condition $(\xi_0,\eta_0).$ Then we are interested in applying Lemma \ref{BC} to $\chi.$

Let us prove that $\chi$ is continuous. Take two points $(\xi_0,\eta_0,u,a,\xi,\eta)$  and $(\xih_0,\etah_0,\uh,\ah,\xih,\etah)$ in the domain.
 One has the following estimations
\be
\label{estchi1}
\begin{split}
\|\chi (\xi_0,&\eta_0,u,a,\xi,\eta)-\chi(\xi_0,\eta_0,u,a,\xih,\etah)\|_X \\
= & \frac{e^{-4TL}}{4L} \left| \int_0^\tau (F(\xi(s),\eta(s),a(s)) - F(\xih(s),\etah(s),a(s)) ) {\rm d}s \right| \\
& +\intT e^{-4tL} \left| \int_0^t (F(\xi(s),\eta(s),a(s)) - F(\xih(s),\etah(s),a(s)) ) {\rm d}s \right| {\rm d} t \\ 
\leq & \left( \frac{e^{-4TL}}{4}+ \frac{e^{-4TL}}{-4} -\frac{1}{-4} \right) \intT |(\xi(s),\eta(s))-(\xih(s),\etah(s))| {\rm d}s \\
\leq& \frac{1}{4} \| (\xi,\eta) - (\xih,\etah) \|_X,
\end{split}
\ee
and
\be
\label{estchi2}
\begin{split}
\|\chi& (\xi_0,\eta_0,u,a,\xih,\etah)-\chi(\xih_0,\etah_0,\uh,\ah,\xih,\etah)\|_X \\
= & \frac{e^{-4TL}}{4L} \left| \begin{pmatrix} \xi_0-\xih_0 \\ \eta_0-\etah_0 \end{pmatrix} +\int_0^\tau (F(\xih(s),\etah(s),a(s)) - F(\xih(s),\etah(s),\ah(s)) ) {\rm d}s \right|  \\
&+ \intT e^{-4tL} \left| \int_0^t (F(\xih(s),\etah(s),a(s)) - F(\xih(s),\etah(s),\ah(s)) ) {\rm d}s \right| {\rm d} t \\ 
&+\frac{e^{-4TL}}{4L}\left| \sum_{\alpha=1}^m \big( u^\alpha(\tau)-u^\alpha(0)-\uh^\alpha(\tau)+\uh^\alpha(0)\big) {\bf e}_{n+\alpha}   \right|\\
&+\intT e^{-4tL} \left| \int_0^t  \sum_{\alpha=1}^m \big(u^\alpha(s)-u^\alpha(0)-\uh^\alpha(s)+\uh^\alpha(0)\big){\bf e}_{n+\alpha} {\rm d} s \right| {\rm d} t.
\end{split}
\ee
Thus, for each $(\xih_0,\etah_0,\uh,\ah,\xih,\etah)$ and for every $\eps>0,$ there exists $\delta>0$ such that if 
\benl
\begin{split}
&|(\xi_0,\eta_0)-(\xih_0,\etah_0)| +|u(0)-\uh(0)| \\
&\quad +|u(\tau)-\uh(\tau)|+\|u-\uh\|_1+ \|a-\ah\|_1 + \| (\xi,\eta) - (\xih,\etah) \|_X < \delta
\end{split}
\eenl 
then
\benl
\|\chi (\xi_0,\eta_0,u,a,\xi,\eta)-\chi(\xih_0,\etah_0,\uh,\ah,\xih,\etah)\|_X <\eps,
\eenl
and hence $\chi$ in continuous. Observe that the modulus of continuity does not depend on $\tau,$ but on $|u(\tau)-\uh(\tau)|$ and, therefore, the same estimation holds for every $\tau \in [0,T].$
Moreover, in view of \eqref{estchi1}, the inequality \eqref{BCeq1} holds as well. Therefore we can apply Lemma \ref{BC} to $\chi$ which yields the desired result for $F$ globally Lipschitz.

In case $F$ is only locally Lipschitz, define
\benl
\hat{F} = 
\left\{
\ba{cl}
F & {\rm on}\ K'\times U, \\
0 & {\rm on}\ (K'\times U)^c,
\ea
\right.
\eenl
and follow previous procedure. The desired result follows.

\end{proof}

\subsection{Properties of the impulsive system}

The analogous of Theorem \ref{ContTS} can be proved for the impulsive system \eqref{AS} by means of the transformation $\phi.$ Hence we get the following result.

\begin{theorem}
\label{ContC1}
\begin{itemize}
\item[(i)] The function $a(\cdot)\mapsto x(x_0,u,a)(\cdot)$ is continuous from $L^1(0,T;A)$ to $L^1(0,T;\cR^n),$ for each $x_0\in \cR^n,$ $u\in \C^1(0,T;U).$
\item[(ii)] For $r>0$ and $\W\subset \C^1(0,T;U),$ let  $K'\subset \cR^n$ such that the trajectories $x(x_0,u,a)$ have values inside $K'$ for all $x_0\in B_r(0),u\in \W,a\in L^1(0,T;A).$ Then there exists $M>0$ such that for every $x_0,\xh_0 \in B_r(0)$ and $u,\uh \in \W,$
\bel{EstC1}
\begin{split}
&|x(x_0,u,a)(\tau)-x(\xh_0,\uh,a)(\tau)| + \int_0^T |x(x_0,u,a)(t)-x(\hat x_0,\uh,a)(t)| \dtt \\
&\leq
M\big[ |x_0-\hat x_0| + |u(0)-\uh(0)| + |u(\tau)-\uh(\tau)|
+\intT |u(t)-\uh(t)|\dtt
\big],
\end{split}
\ee
uniformly in $a \in L^1(0,T;A).$
\end{itemize}
\end{theorem}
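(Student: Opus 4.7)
The plan is to deduce Theorem~\ref{ContC1} from Theorem~\ref{ContTS} by pushing the estimates through the diffeomorphism $\phi$ of Section~2.1. By Lemma~\ref{equivC1}, for $\C^1$ controls the solution of \eqref{AS}-\eqref{AS0} is recovered from the solution $(\xi,\eta)$ of the transformed system \eqref{TS}-\eqref{TS0} via $(x(t),z(t))=\phi^{-1}(\xi(t),\eta(t))$, with initial data $(\xi_0,\eta_0)=(\varphi(x_0,u_0),u_0)$. Hence every regularity property of the map $(x_0,u_0,u,a)\mapsto\xi$ transfers to $(x_0,u_0,u,a)\mapsto x$ modulo the Lipschitz behavior of $\phi^{\pm 1}$.

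The first step is to localize. Given $r>0$ and $\W\subset\C^1(0,T;U)$ together with an invariant compact set $K'\subset\cR^n$ for the $x$-component, let $U_\W\subset U$ be a bounded set containing the ranges of all $u\in\W$. Then $K'\times U_\W$ and its image under $\phi$ form compact regions that contain all $(x,z)$- and all $(\xi,\eta)$-trajectories simultaneously. Since $\phi$ is a diffeomorphism, both $\phi$ and $\phi^{-1}$ restricted to these compact sets are Lipschitz, with a common constant $L_\phi$ depending only on $r$ and $\W$.

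With this in hand, part~(i) follows immediately: fixing $x_0$ and $u\in\C^1$ freezes $(\xi_0,\eta_0)$, so Theorem~\ref{ContTS}(i) gives continuity of $a\mapsto\xi(\xi_0,u,a)$ in $L^1$, and Lipschitz composition with $\phi^{-1}$ yields the same for $x$. For part~(ii), I would combine the two pointwise bounds
\benl
|\xi_0-\xih_0|\leq L_\phi\big(|x_0-\xh_0|+|u(0)-\uh(0)|\big),\qquad |x(t)-\xh(t)|\leq L_\phi\,|\xi(t)-\xih(t)|\quad\text{for a.e.\ }t,
\eenl
with the estimate of Theorem~\ref{ContTS}(ii) applied to the transformed system (using initial data $(\xi_0,\eta_0)$ and $(\xih_0,\etah_0)$). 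Specializing the second inequality at $t=\tau$, integrating it over $[0,T]$, and absorbing $L_\phi$ into the constant delivers \eqref{EstC1}.

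The main obstacle is the preliminary localization step: one must verify that, for every perturbed pair $(\xh_0,\uh)\in B_r(0)\times\W$ and every $a\in L^1(0,T;A)$, the corresponding trajectory stays inside a single compact set in both coordinate systems, so that a single Lipschitz constant $L_\phi$ works throughout. This uniform-in-$a$ invariance rests on the linear growth bound of Assumption~\ref{Ass}(iv) together with a Gr\"onwall estimate applied to \eqref{TS}; once secured, the remainder of the argument is the mechanical push-forward described above.
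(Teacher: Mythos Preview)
Your approach is correct and is precisely what the paper does: it merely asserts that Theorem~\ref{ContC1} follows from Theorem~\ref{ContTS} ``by means of the transformation $\phi$'' and gives no further detail, so your argument is in fact a fleshed-out version of the paper's one-line justification. One small slip: since $x=\varphi^{-1}(\xi,\eta)$ depends on \emph{both} components, the pointwise bound should read $|x(t)-\xh(t)|\leq L_\phi\big(|\xi(t)-\xih(t)|+|u(t)-\uh(t)|\big)$, but the extra $|u-\uh|$ term is already present on the right-hand side of \eqref{EstC1} and is absorbed into the constant $M$.
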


From previous Theorem \ref{ContC1} and Definition \ref{DefGSpd} we get the following result.
\begin{corollary}
For each $(x_0,u,a)\in \cR^n \times L^1_{\{0\}}(0,T;U) \times  L^1(0,T;A),$ there exists a unique generalized solution $\xi \in L^1_{\{0\}} (0,T;\cR^n), \eta=u,$ of the impulsive Cauchy problem \eqref{AS}-\eqref{AS0}. 
\end{corollary}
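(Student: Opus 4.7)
The plan is to invoke the continuous-dependence estimate \eqref{EstC1} of Theorem \ref{ContC1} both to construct the generalized solution as an $L^1$-limit of classical ones and to show uniqueness of that limit. The argument is the standard density-plus-completeness scheme, once the right quantitative estimate is in hand.

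For existence, I would first produce a sequence $(u_k)\subset \C^1([0,T];U)$ approximating $u$ in $L^1_{\{0\}}$, that is, with $\|u_k-u\|_{L^1}\to 0$ and $u_k(0)\to u(0)=u_0$. Each classical Cauchy problem \eqref{AS}-\eqref{AS0} with data $(x_0,u_k,a)$ has a unique solution $x_k:=x(x_0,u_k,a)$ by Assumption \ref{Ass}, and the linear growth condition in Assumption \ref{Ass}(iv) together with uniform boundedness of $\|u_k\|_\infty$ keeps all the $x_k$ inside a common compact set $K'$. Specializing \eqref{EstC1} to the pair $(u_k,u_l)$ at $\tau=0$ (so that the two initial data and the two point-values at $\tau$ collapse to $|u_k(0)-u_l(0)|$) yields the Cauchy estimate
\begin{equation*}
\int_0^T |x_k(t)-x_l(t)|\,\dtt \leq M\left[\,2\,|u_k(0)-u_l(0)| + \int_0^T |u_k(t)-u_l(t)|\,\dtt\,\right],
\end{equation*}
whose right-hand side vanishes as $k,l\to\infty$. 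Thus $(x_k)$ is Cauchy in $L^1(0,T;\cR^n)$ and converges to some $x\in L^1$. Setting $x(0):=x_0$ gives a representative pointwise defined on $\{0\}$, so $x\in L^1_{\{0\}}(0,T;\cR^n)$, and by construction it fulfills Definition \ref{DefGSpd}.

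For uniqueness I would take two generalized solutions $x,x'$ associated with approximating sequences $(u_k),(u'_k)$ and corresponding classical trajectories $x_k,x'_k$. Applying \eqref{EstC1} again to the pair $(u_k,u'_k)$ at $\tau=0$ produces the analogous inequality
\begin{equation*}
\int_0^T |x_k(t)-x'_k(t)|\,\dtt \leq M\left[\,2\,|u_k(0)-u'_k(0)| + \int_0^T |u_k(t)-u'_k(t)|\,\dtt\,\right].
\end{equation*}
Because both $u_k(0)$ and $u'_k(0)$ converge to $u(0)$ and both $u_k,u'_k$ converge to $u$ in $L^1$, the right-hand side tends to zero, so $x=x'$ almost everywhere on $[0,T]$. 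Both solutions are pointwise defined at $t=0$ with common value $x_0$, hence they also coincide on the set $\{0\}$, proving equality in $L^1_{\{0\}}$.

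The one non-routine step is the preliminary density argument: producing $\C^1$ controls that stay inside $U$ and carry the prescribed value $u_0$ at $t=0$. I would handle it by first approximating $u$ in $L^1$ by a piecewise-constant function with values in the interior of $U$ (using that $U$ is the closure of a connected open set, by Assumption \ref{Ass}(i)), extending it constantly to a neighborhood of $0$ with value $u_0$, and then mollifying; for sufficiently small mollification parameter the values remain in $U$, the value at $0$ equals $u_0$, and $L^1$ convergence is preserved. This is the only technical point; the rest of the proof is purely a telescoping use of \eqref{EstC1}.
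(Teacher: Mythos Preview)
Your argument is correct and is precisely the unpacking of the paper's one-line justification ``From previous Theorem \ref{ContC1} and Definition \ref{DefGSpd}'': you use the quantitative estimate \eqref{EstC1} on pairs of $\C^1$ approximants to extract a Cauchy sequence in $L^1$ (existence) and to show independence of the approximating sequence (uniqueness), exactly as intended. One small caveat worth tightening: your appeal to ``uniform boundedness of $\|u_k\|_\infty$'' to keep the trajectories in a common compact $K'$ is not automatic for a general $u\in L^1_{\{0\}}(0,T;U)$ when $U$ is unbounded, so in your density step you should build the $u_k$ so that $\sup_k\|u_k\|_\infty<\infty$ (e.g.\ by first truncating $u$ in $L^1$ before projecting into $\mathrm{int}\,U$ and mollifying); otherwise the hypothesis of Theorem \ref{ContC1}(ii) is not met.
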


\subsection{Generalized solution pointwise defined everywhere}
\label{RemConst}
In the case where $u$ is defined pointwise on $[0,T],$ the trajectory $x(x_0,u,a)(\cdot)$ can also be determined pointwise following the procedure described next. Let $\tau \in [0,T],$ and consider a sequence $(w_k^\tau) \subset \C^1$ such that $w_k^\tau(0) = u(0),$ $w_k^\tau(\tau) = u(\tau)$ and $w_k^\tau \to u$ in $L^1(0,T;U).$  The estimate \eqref{EstC1} implies that $x(x_0,w_k^\tau,a)(\cdot)$ tends to $x(x_0,u,a)(\cdot)$ in $L^1$ and that $x(x_0,w_k^\tau,a)(\tau)$ has limit. Denote this limit by $x(\tau).$ Note that two different sequences $w_k^\tau$ and $\wh_k^\tau$ yield the same $x(\tau)$ by  \eqref{EstC1}. Thus $x$ is well-defined. In the sequel we prove that $x$  is a generalized solution of \eqref{AS}-\eqref{AS0}.
In fact, for any $t\in [0,T],$ one can extract a subsequence $(w_{k'}^t)$ from $(w_k^t)$ which converges pointwise to $u$ on the complement of a set $\N$ of measure zero. For any $\tau \in [0,T] \backslash \N,$ by \eqref{EstC1} one has
\benl
\begin{split}
|x(\tau)-&x(x_0,w_{k'}^t,a)(\tau)| \\
&\leq |x(\tau)-x(x_0,w_{k'}^\tau,a)(\tau)|  + |x(x_0,w_{k'}^\tau,a)(\tau)-x(x_0,w_{k'}^t,a)(\tau)|\\
&\quad + \int_0^T |x(x_0,w_{k'}^\tau,a)(s)-x(x_0,w_{k'}^t,a)(s)| {\rm d}s \\
&\leq|x(\tau)-x(x_0,w_{k'}^\tau,a)(\tau)| + 
M\big[ |w_{k'}^\tau(\tau)-w_{k'}^t(\tau)|\\
&\quad+\intT |w_{k'}^\tau(s) - w_{k'}^t(s)    |{\rm d} s
\big].
\end{split}
\eenl
The right hand-side goes to 0 since $w_{k'}^\tau (\tau) \to U(\tau).$ Thus, $x(x_0,w_{k'}^t,a) \to x$ almost everywhere. Since $x(x_0,w_{k'}^t,a)$ have uniformly bounded values, it follows that they converge to $x$ in $L^1.$ Therefore, $(x,z=u)$ is a generalized solution of \eqref{AS}-\eqref{AS0}.

Given $x_0\in \cR^n,$ $u\in L_{[0,T]}^1$ and $a\in L^1(0,T;A),$ there is only one solution $(\xi,\eta=u)$ of \eqref{TS}-\eqref{TS0}, for which $\xi$ is an absolutely continuous function. Note that
\benl
\begin{split}
x(\tau) &= \lim_{k\to\infty} x(x_0,w_k^\tau,a)(\tau) \\
& = \lim_{k\to\infty} \varphi^{-1}(\xi(\varphi(x_0,u_0),w_k^\tau,a)(\tau),w_k^\tau (\tau))
= \varphi^{-1} (\xi(\tau),u(\tau)).
\end{split}
\eenl
Hence, we get.

\begin{proposition}
\label{Propequiv}
For each $(x_0,u,a) \in \cR^n \times \LT(0,T;U) \times L^1(0,T;A),$ there is a unique solution $(x,z)$ of \eqref{AS}-\eqref{AS0} that is pointwise defined in $[0,T].$ Moreover, it is given by the formula
\benl
(x(\tau),z(\tau)) = \phi^{-1}(\xi(\tau),u(\tau)),\quad \text{for}\ \text{all}\ \tau \in [0,T],
\eenl
where $\xi \in AC(0,T;\cR^n)$ is the unique solution of \eqref{TS}-\eqref{TS0} corresponding to $(x_0,u,a)$ and $\eta=u.$
\end{proposition}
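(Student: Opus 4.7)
The plan is to piece together the ingredients already developed in Subsection \ref{RemConst} and upgrade them to a statement of existence, uniqueness and explicit representation. Two facts underlie the argument: the classical Cauchy problem \eqref{TS}-\eqref{TS0} admits, for every $(x_0,u,a)\in\cR^n\times L^1_{[0,T]}(0,T;U)\times L^1(0,T;A)$, a unique absolutely continuous solution $\xi$; and, via Lemma \ref{equivC1}, for smooth $u$ the original trajectory is recovered by $x(t)=\varphi^{-1}(\xi(t),u(t))$.

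For existence, I would invoke essentially verbatim the construction of Subsection \ref{RemConst}: for each $\tau\in[0,T]$ choose a sequence $(w_k^\tau)\subset\C^1(0,T;U)$ with $w_k^\tau(0)=u(0)$, $w_k^\tau(\tau)=u(\tau)$ and $w_k^\tau\to u$ in $L^1$, and set $x(\tau):=\lim_k x(x_0,w_k^\tau,a)(\tau)$. The estimate \eqref{EstC1} of Theorem \ref{ContC1} guarantees that this limit exists and is independent of the approximating sequence, and the argument already spelled out in that subsection shows the resulting $x$ is a generalized solution of \eqref{AS}-\eqref{AS0} pointwise defined on $[0,T]$ in the sense of Definition \ref{DefGSpd}. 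The representation formula then comes out by applying Lemma \ref{equivC1} to each smooth $w_k^\tau$,
\benl
x(x_0,w_k^\tau,a)(\tau)=\varphi^{-1}\!\left(\xi(\varphi(x_0,u_0),w_k^\tau,a)(\tau),\,w_k^\tau(\tau)\right),
\eenl
and passing to the limit: Theorem \ref{ContTS}(ii), together with the matching conditions $w_k^\tau(0)=u(0)$, $w_k^\tau(\tau)=u(\tau)$ and $\|w_k^\tau-u\|_1\to 0$, makes the first argument converge to $\xi(\tau)$; the second argument equals $u(\tau)$ for every $k$; and $\varphi^{-1}$ is continuous.

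For uniqueness, suppose $(\tilde x,\tilde z=u)$ is any generalized solution pointwise defined on $[0,T]$. Definition \ref{DefGSpd} furnishes an approximating sequence $(u_k)\subset\C^1$ with $u_k\to u$ in $L^1_{[0,T]}$, so in particular $u_k(\tau)\to u(\tau)$ and $x(x_0,u_k,a)(\tau)\to\tilde x(\tau)$ for every $\tau\in[0,T]$. Rerunning the previous limiting argument with $u_k$ in place of $w_k^\tau$, the boundary terms in \eqref{EstC1} now vanishing by convergence rather than by exact equality, yields $\tilde x(\tau)=\varphi^{-1}(\xi(\tau),u(\tau))=x(\tau)$ for every $\tau$, establishing uniqueness and the formula simultaneously.

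The delicate point, and the reason the pointwise-defined framework of Definition \ref{DefGSpd} is indispensable, is controlling the pointwise evaluation term $|u(\tau)-\uh(\tau)|$ sitting on the right-hand side of \eqref{EstC1}: it must be driven to zero either by the freedom to interpolate the smooth approximation at the evaluation point $\tau$ (existence) or by the built-in pointwise convergence encoded in the notion of $L^1_{[0,T]}$-convergence (uniqueness). Everything else is a straightforward transfer of continuity properties through the diffeomorphism $\phi$.
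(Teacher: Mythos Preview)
Your proposal is correct and follows essentially the same route as the paper: the paper's proof is simply the preceding discussion in Subsection \ref{RemConst}, capped by ``Hence, we get,'' and you have reproduced that construction (existence via the $\tau$-indexed approximants $w_k^\tau$ and the estimate \eqref{EstC1}, followed by the limit computation through $\varphi^{-1}$ to obtain the representation formula). Your uniqueness argument---pulling an approximating sequence out of Definition \ref{DefGSpd} and rerunning the limit through $\phi$---is more explicit than what the paper writes down, but it is exactly the argument the paper intends and leaves to the reader.
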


\begin{theorem}[Robustness of the impulsive system]
The assertions  in Theorem \ref{ContC1} hold  when we  consider the controls u in $\LT(0,T;U).$ 
\end{theorem}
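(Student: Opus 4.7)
The plan is to transfer the estimates of Theorem \ref{ContTS} from the transformed system \eqref{TS}--\eqref{TS0} to the impulsive system \eqref{AS}--\eqref{AS0} via the pointwise identity
\benl
x(x_0,u,a)(\tau) = \varphi^{-1}\big(\xi(\varphi(x_0,u_0),u,a)(\tau),\,u(\tau)\big)
\eenl
supplied by Proposition \ref{Propequiv}, which is valid for every $\tau \in [0,T]$ and every $(x_0,u,a) \in \cR^n \times \LT(0,T;U) \times L^1(0,T;A)$.

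First I would upgrade Theorem \ref{ContTS} itself to controls $u \in \LT(0,T;U)$. The decisive observation is that in \eqref{TS} the derivative $\dot u$ enters only through $\dot\eta = \dot u$, so once one sets $\eta = u$ the remaining equation for $\xi$ is a Carath\'eodory ODE whose right-hand side $\tilde F(\xi,u,a)$ does \emph{not} involve $\dot u$. The fixed-point map $\chi$ from the proof of Theorem \ref{ContTS} therefore continues to make sense on $\LT$, because the only explicit occurrences of $u$ in $\chi$ are the pointwise values $u(0)$, $u(\tau)$ and the integrals $\int_0^t u$, all of which are well-defined for elements of $\LT$. Replacing $\C^1(0,T;U)$ by $\LT(0,T;U)$ in the parameter space $M$ and re-running the contraction argument verbatim (after truncating $F$ outside $K'\times U$ as in the original proof to get a globally Lipschitz field) produces both the continuity of $a \mapsto \xi(\xi_0,u,a)$ and the Lipschitz-type estimate of Theorem \ref{ContTS}(ii), now uniformly in $u, \uh$ ranging over a bounded subset $\W \subset \LT(0,T;U)$.

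Next I would pass back to the impulsive variables using the diffeomorphism $\phi$. Since $\varphi^{-1}$ is smooth, it is Lipschitz on the compact set enclosing all trajectories and controls under consideration. For part (i), fixing $x_0$ and $u \in \LT(0,T;U)$, continuity of $a \mapsto \xi(\varphi(x_0,u_0),u,a)$ from $L^1(0,T;A)$ to $L^1(0,T;\cR^n)$ composed with the continuous map $(\xi,\eta)\mapsto \varphi^{-1}(\xi,\eta)$ evaluated at $\eta = u$ yields the continuity of $a \mapsto x(x_0,u,a)$. For part (ii), the extended Lipschitz estimate for $\xi$ together with a local bound of the form
\benl
|\varphi^{-1}(\xi,u) - \varphi^{-1}(\xih,\uh)| \leq C\,\big(|\xi-\xih| + |u-\uh|\big)
\eenl
gives precisely \eqref{EstC1} for $x$; note that the pointwise term $|u(\tau)-\uh(\tau)|$ on the right-hand side naturally combines the analogous contribution from the $\xi$-estimate with the $u$-slot of $\varphi^{-1}$ evaluated at $\tau$.

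The main obstacle is verifying that the contraction argument underlying Theorem \ref{ContTS} actually survives the drop in regularity from $\C^1$ to $\LT$. Once one recognises that $\dot u$ disappears from the vector field of the transformed equation and that only the pointwise values $u(0)$, $u(\tau)$ and certain integrals of $u$ enter the fixed-point map $\chi$, the estimates \eqref{estchi1}--\eqref{estchi2} are structurally unchanged and the contraction constant $1/4$ is preserved. The potential ambiguity of $u$ at $t=0$ and at the reference instant $t=\tau$ is harmlessly absorbed into the $\LT$ framework, i.e.\ the use of pointwise-defined representatives introduced in Section \ref{SecSOL}, which is exactly what Proposition \ref{Propequiv} relies upon.
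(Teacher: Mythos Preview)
Your proof is correct but follows a different route from the paper's. The paper dispatches the result in a single sentence, observing that it is an immediate consequence of Theorem \ref{ContC1} (the $\C^1$ case) together with Definition \ref{DefGSpd}: given $u,\uh\in\LT$, one approximates each by $\C^1$ controls converging in $L^1_{[0,T]}$ (with the correct pointwise values at $0$ and at the reference instant $\tau$, as supplied by the construction in Section \ref{RemConst}), applies \eqref{EstC1} to the approximants, and lets $k\to\infty$; both sides of the inequality pass to the limit and the estimate is preserved. The continuity in $a$ is handled the same way.

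You instead work directly on the transformed system, noting that once $\eta=u$ is substituted the $\xi$-equation is a classical Carath\'eodory ODE with no trace of $\dot u$, so the contraction argument behind Theorem \ref{ContTS} extends verbatim to $u\in\LT$; you then pull the estimates back through $\phi^{-1}$ via Proposition \ref{Propequiv}. This is more explicit and self-contained, but longer: the paper's density argument is short precisely because all the analytic work has already been packaged into the very definition of a generalized solution as a limit of classical ones. One minor correction: in $\chi$ the control $u$ does not enter through integrals $\int_0^t u$ but through the pointwise values $u(t)$ for every $t$ (via the term $u^\alpha(t)-u^\alpha(0)$) and through the integrand $F(\xi,\eta,a)$ with $\eta=u$; this is still harmless because elements of $\LT$ are pointwise defined on all of $[0,T]$, but you should also note that the fixed-point space $X$ must be adjusted accordingly, since $\eta=u$ is no longer $\C^1$.
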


\begin{proof}
It is a immediate consequence of Theorem \ref{ContC1} and the Definition \ref{DefGSpd} of generalized solution.
\end{proof}

\subsection{Statement of the Optimal Control Problem}

Now we are ready to state in a proper way the optimal control problem we deal with. 
Let $\gamma:\cR^{n+m} \to \cR$ be a smooth function. Denote by $\P$ the (impulsive) optimal control problem of finding $(x_0,u,a)\in \cR^n \times \LT(0,T;U) \times L^1(0,T;A)$ that minimizes 
\benl
\gamma(x(T),z(T)),
\eenl
where $(x,z)$ is the generalized solution of \eqref{AS}-\eqref{AS0} associated to  $(x_0,u,a).$

\section{The Transformed Optimal Control Problem}\label{SecTP}

Next we introduce an auxiliary optimal control problem in the transformed variables that will be used afterwards to derive optimality conditions for $\P.$ Denote by $\P'$ the problem consisting of minimizing
\be
\Psi (\xi(T),\eta(T)),
\ee
over the trajectories of the system \eqref{TS}-\eqref{TS0} with controls $u\in \LT(0,T;U).$
Here the function $\Psi: \cR^{n+m} \rightarrow \cR$ is defined by
\be
\Psi(\xi,\eta):= \gamma (\phi^{-1}(\xi,\eta)).
\ee

The following result is a straightforward consequence of Proposition \ref{Propequiv}.

\begin{proposition}
A triple $(x_0,u^*,a^*) \in \cR^n \times \LT(0,T;U) \times L^1(0,T;A)$ is optimal for $\P$ if and only if it is optimal for $\P'.$ Moreover, $\P$ and $\P'$ have the same optimal values.
\end{proposition}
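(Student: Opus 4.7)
The plan is to leverage Proposition \ref{Propequiv} directly: it already establishes a one-to-one correspondence between the (pointwise defined) generalized solutions of the impulsive Cauchy problem \eqref{AS}-\eqref{AS0} and the classical solutions of the transformed problem \eqref{TS}-\eqref{TS0}, via the diffeomorphism $\phi$. Since the control triples $(x_0, u, a)$ parametrize admissible processes for both problems $\P$ and $\P'$ in exactly the same way, the only thing to verify is that the two cost functionals coincide on corresponding trajectories.

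First I would fix an admissible triple $(x_0, u, a) \in \cR^n \times \LT(0,T;U) \times L^1(0,T;A)$ and write $(x, z)$ for the unique pointwise defined generalized solution of \eqref{AS}-\eqref{AS0} and $(\xi, \eta)$ for the unique (absolutely continuous) solution of \eqref{TS}-\eqref{TS0}. By Proposition \ref{Propequiv}, these are linked pointwise on $[0,T]$ by
\benl
(x(\tau), z(\tau)) = \phi^{-1}(\xi(\tau), \eta(\tau)), \qquad \eta = u, \quad \tau \in [0,T].
\eenl
Evaluating at $\tau = T$ and composing with the cost function $\gamma$, one gets
\benl
\gamma(x(T), z(T)) = \gamma\big(\phi^{-1}(\xi(T), \eta(T))\big) = \Psi(\xi(T), \eta(T)),
\eenl
by the very definition of $\Psi$. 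Hence the cost of $(x_0, u, a)$ in $\P$ equals the cost of the same triple in $\P'$.

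Since the set of admissible triples is the same for $\P$ and $\P'$ and the two cost functionals coincide on it, the infimum over admissible triples is the same for both problems, and a triple attains the infimum in one formulation if and only if it does so in the other. This yields both assertions of the Proposition. There is no real obstacle here: the entire content of the statement has been built into the change of variables $\phi$, and the verification amounts to a single application of Proposition \ref{Propequiv} together with the definition of $\Psi$.
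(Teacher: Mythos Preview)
Your argument is correct and is exactly the straightforward consequence of Proposition \ref{Propequiv} that the paper invokes without further detail: the bijection between generalized solutions of \eqref{AS}-\eqref{AS0} and solutions of \eqref{TS}-\eqref{TS0} via $\phi$, combined with the definition $\Psi=\gamma\circ\phi^{-1}$, makes the two costs agree on the common admissible set, so optimality and optimal values coincide.
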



\section{The Adjoint Equation}\label{SecAE}

In this section we show that the adjoint equation associated to \eqref{AS} is commutative. Let us first establish a technical lemma that will yield the desired commutativity afterwards.

Consider $a$ and $b$ two vector fields of class $\C^2$ from $\cR^{N}$ to $\cR^N,$ and define
\bel{A}
\begin{split}
\A:\cR^{2N}&\rightarrow \cR^{2N}\\
(y,w)&\rightarrow \A(y,w):=
\begin{pmatrix}
a(y)\\
-\nabla a(y)\tras \cdotp w
\end{pmatrix},
\end{split}
\ee
and
\bel{B}
\begin{split}
\B:\cR^{2N}&\rightarrow \cR^{2N}\\
(y,w)&\rightarrow \B(y,w):=
\begin{pmatrix}
b(y)\\
-\nabla b(y)\tras \cdotp w
\end{pmatrix}.
\end{split}
\ee

\begin{lemma}\label{ABcomm}
Let $a,b:\cR^N\rightarrow \cR^N$ be two vector fields of class $\C^2,$ such that $[a,b]=0.$ Then, the vector fields defined in \eqref{A}-\eqref{B} commute as well, or equivalently, $[\A,\B]=0.$
\end{lemma}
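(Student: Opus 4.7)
The strategy is to compute the Lie bracket $[\A,\B]$ directly in coordinates on $\cR^{2N}$, splitting it into its upper $N$-block (the $y$-components) and its lower $N$-block (the $w$-components), and to use the hypothesis $[a,b]=0$ together with its first-order $y$-derivative to obtain cancellation in the lower block.

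First I would handle the upper block. The first $N$ components of both $\A$ and $\B$ are exactly $a(y)$ and $b(y)$ and depend only on $y$, so differentiations in the $w$-directions contribute nothing. Hence the top $N$ entries of $[\A,\B](y,w)$ collapse to $[a,b](y)$, which vanishes by hypothesis. For the lower block, writing $\A^{N+m}(y,w)=-\sum_i(\partial_m a^i)(y)\,w_i$ and similarly for $\B^{N+m}$, I would expand $\A^L\partial_L\B^{N+m}-\B^L\partial_L\A^{N+m}$ (with $L$ running over the $2N$ coordinates), separating the $\partial_{y^l}$ and $\partial_{w_r}$ contributions. The former produce second derivatives of $a,b$ contracted with $w$, and the latter produce products of first derivatives of $a,b$ contracted with $w$. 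After collecting, the $m$-th lower component becomes
\[
\sum_i w_i\Bigl[-\sum_l a^l\,\partial_l\partial_m b^i+\sum_l b^l\,\partial_l\partial_m a^i+\sum_r(\partial_r a^i)(\partial_m b^r)-\sum_r(\partial_r b^i)(\partial_m a^r)\Bigr].
\]
The crucial step is then to differentiate the identity $[a,b]^i(y)\equiv 0$ with respect to $y^m$, which yields
\[
\sum_l\bigl[(\partial_m a^l)(\partial_l b^i)+a^l\,\partial_m\partial_l b^i-(\partial_m b^l)(\partial_l a^i)-b^l\,\partial_m\partial_l a^i\bigr]=0.
\]
Substituting this into the previous expression to trade the second-derivative terms for first-derivative ones, and renaming the dummy indices $l\leftrightarrow r$, the four summands cancel pairwise, so $[\A,\B]^{N+m}=0$ for each $m$.

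The main obstacle is purely bookkeeping: keeping transpose/Jacobian conventions consistent (recall $(\nabla a)_{ij}=\partial_j a^i$, so $\nabla a^\top w$ has $j$-th entry $\sum_i(\partial_j a^i)w_i$) and matching the indices produced by the direct expansion with those obtained from differentiating $[a,b]=0$. As a conceptual check on the calculation, observe that $\A$ is the Hamiltonian vector field on $T^*\cR^N$ associated with $H_a(y,w)=w\cdot a(y)$, and a short computation gives the Poisson bracket $\{H_a,H_b\}=-w\cdot[a,b]$; since $a\mapsto X_{H_a}$ sends Poisson brackets to Lie brackets (up to sign), $[a,b]=0$ forces $[\A,\B]=0$, which corroborates the index computation above.
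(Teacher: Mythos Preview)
Your proof is correct and follows essentially the same route as the paper's: both compute $[\A,\B]$ in coordinates, observe that the upper $N$ components reduce to $[a,b]=0$, and show that the lower $N$ components equal $w_i\,\partial_{y^k}[b,a]^i$ (the paper packages this recognition in one line, whereas you arrive at it by explicitly differentiating the identity $[a,b]^i\equiv 0$ and substituting). The Hamiltonian/cotangent-lift remark you append is a pleasant extra sanity check that the paper does not include.
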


\begin{proof}
Here the Einstein notation is used, which implies summation over repeated indexes.
For $k=1,\dots,N,$ we have
\be
[\A,\B]^k=
\frac{\partial b^k}{\partial y^s}\, a^s -
\frac{\partial a^k}{\partial y^s}\, b^s=[a,b]^k=0,
\ee
and
\be
\begin{split}
[\A,\B]^{N+k}&=
-\frac{\partial^2 b^\ell}{\partial y^s \partial y^k}\, w^{\ell}\, a^s +\frac{\partial b^s}{\partial y^k}\frac{\partial a^{\ell}}{\partial y^s} w^{\ell}
+\frac{\partial^2 a^\ell}{\partial y^s \partial y^k}\, w^{\ell}\,b^s -\frac{\partial a^s}{\partial y^k}\frac{\partial b^{\ell}}{\partial y^s} w^{\ell}\\
&=
\frac{\partial}{\partial y^k}\left\{-\frac{\partial b^{\ell}}{\partial y^s}\,a^s + \frac{\partial a^{\ell}}{\partial y^s}\, b^s   \right\}w^{\ell}=\frac{\partial}{\partial y^k}[b,a]^{\ell} w^{\ell}=0.
\end{split}
\ee
Thus, the result follows.
\end{proof}

Consider now the augmented system \eqref{AS}-\eqref{AS0} together with its associated {\it adjoint equation}
\begin{align}
\label{AS2}\begin{pmatrix}
\dot{x}\\ \dot{z}
\end{pmatrix} &= {f}(x,z,a) + \ds\sum_{i=1}^m \dot u^i g_i(x,z),\\
\label{AE}(\dot{p}_1, \dot{p}_2)
&=
-
({p_1}, {p_2})
\,\cdotp \left( \nabla_{(x,u)}{f}(x,u,a) + \sum_{i=1}^m \dot{u}^{i} \nabla {g}_i (x,u)\right),
\end{align}
and the endpoint conditions
\bel{AS02}
(x(0),z(0))=(x_0,u_0),
\ee
\bel{AET}
({p_1}(T),{p_2}(T) )= \nabla \gamma (x(T),u(T)).
\ee
The vector fields that are coefficients of $\dot{u}$ in \eqref{AS2}-\eqref{AE} are given by
\be
\G_i (x,z,p_1,p_2):=
\begin{pmatrix}
g_i(x,z) \\
\nabla g_i\tras(x,z) \cdotp
\begin{pmatrix}
{p_1}\tras\\ {p_2}\tras
\end{pmatrix}
\end{pmatrix}.
\ee
Applying Lemma \ref{ABcomm} we get that,  for each pair $i,j,$  $[\G_i,\G_j]=0$ since $[g_i,g_j]=0.$ Hence, a concept of solution equivalent to the one given for the augmented impulsive system \eqref{AS}-\eqref{AS0} can be given to \eqref{AE},\eqref{AET}. We will refer to it as the {\it generalized solution of the adjoint equation.}
Furthermore, we can also relate \eqref{AS2}-\eqref{AET} with the adjoint equation associated to the transformed equation \eqref{TS}-\eqref{TS0} via a change of variables.
With this aim, consider the adjoint system associated to \eqref{TS}-\eqref{TS0},
\bel{ATE}
\begin{split}
 \dot\xi(t) &= \tilde{ F}(\xi(t),\eta(t),a(t)), \\
\dot\eta(t) &= \dot u(t),\\
\dot\pi_1(t)&=-\pi_1(t)\cdotp\nabla_{\xi}\tilde{ F}(\xi(t),\eta(t),a(t)),\\
\dot\pi_2(t)&=-\pi_1(t)\cdotp\nabla_{\eta}\tilde{ F}(\xi(t),\eta(t),a(t)),
\end{split}
\ee
with endpoint conditions given by
\bel{ATE0T}
\begin{split}
 \xi(0) &= \varphi(x_0,u_0), \\
 \eta(0) &=u_0,\\
(\pi_1(T),\pi_2(T)) &= \nabla \Psi(\xi(T),\eta(T)).
\end{split}
\ee

\begin{proposition}[Generalized solution of the adjoint equation]
Given $(x,z,u)$ a generalized solution of \eqref{AS}-\eqref{AS0} defined pointwise everywhere, let $(\xi,\eta,a,u)$ be its transformation through $\phi$ and $(\pi_1,\pi_2)$
be the solution of \eqref{ATE}-\eqref{ATE0T} in the classical sense.
Then, the generalized solution $p$ of \eqref{AE},\eqref{AET} verifies
\be
p=\pi \cdotp \nabla \phi (x,u).
\ee
\end{proposition}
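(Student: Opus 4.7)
The strategy is to approximate by smooth controls, verify the identity in the classical regime, and then pass to the limit using the robustness machinery already established for the state. The crucial structural observation is that the joint Cauchy problem formed by \eqref{AS2}--\eqref{AE} in the extended variable $(x,z,p_1,p_2)$ is itself a commutative impulsive system: its impulsive vector fields are exactly the $\G_i$, and by Lemma \ref{ABcomm} applied with $a=g_i$, $b=g_j$ (together with Assumption \ref{AssComm}) one has $[\G_i,\G_j]=0$. Consequently, Definition \ref{DefGSpd} and the robustness result Theorem \ref{ContC1} extend verbatim to the enlarged state, so the notion of generalized solution $p$ is well posed and continuous in $L^1_I$ with respect to $u$.

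Next, I would fix a sequence $(u_k)\subset \C^1$ with $u_k(0)=u_0$, converging to $u$ in $L^1_I$ as in Definition \ref{DefGSpd}, where $I$ contains every time at which the identity is to be read. Let $(x_k,z_k,p_k)$ denote the classical solution of \eqref{AS2}--\eqref{AE} with endpoint data \eqref{AS02}--\eqref{AET}, let $(\xi_k,\eta_k):=\phi(x_k,z_k)$ be its image under the straightening diffeomorphism (equivalent to the original state by Lemma \ref{equivC1}), and let $\pi_k$ be the classical solution of \eqref{ATE}--\eqref{ATE0T}. For smooth $u_k$ the identity
\benl
p_k = \pi_k \cdotp \nabla\phi(x_k,u_k)
\eenl
is the classical covector/pullback rule for adjoint variables under a change of coordinates in an ODE. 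I would verify it in two standard steps: differentiating $\pi_k\cdotp\nabla\phi(x_k,z_k)$ in time, using the chain-rule identities $F=\nabla\phi\cdotp f$ and $G_i=\nabla\phi\cdotp g_i$ implicit in Lemma \ref{equivC1} together with Lemma \ref{flowbox}, to show that it satisfies \eqref{AE}; and checking the terminal condition at $t=T$ from $\Psi=\gamma\circ\phi^{-1}$, which yields $\nabla\gamma=\nabla\Psi\cdotp\nabla\phi$. Uniqueness of the classical adjoint then forces the identity throughout $[0,T]$.

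Finally, I would pass to the limit. On the transformed side, $\xi_k\to\xi$ in $L^1$ by Theorem \ref{ContTS}, and since \eqref{ATE} is a classical linear ODE in $\pi_k$ whose coefficients depend continuously on $(\xi_k,u_k,a)$, one gets $\pi_k\to\pi$ uniformly on $[0,T]$. On the impulsive side, $(x_k,z_k,p_k)\to(x,z,p)$ in $L^1_I$ by the robustness theorem applied to the enlarged commutative impulsive system. Continuity of $(x,u)\mapsto\nabla\phi(x,u)$ then yields $\pi_k\cdotp\nabla\phi(x_k,u_k)\to\pi\cdotp\nabla\phi(x,u)$ in $L^1_I$, so the identity passes to the limit.

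The main obstacle I anticipate is reconciling the two modes of convergence at individual points of $I$: while $\pi_k\to\pi$ uniformly, $p_k\to p$ holds only in $L^1_I$, which combines pointwise convergence on $I$ with $L^1$ convergence elsewhere. The approximating sequence $(u_k)$ must therefore be chosen as in Subsection \ref{RemConst}, so that $u_k(\tau)\to u(\tau)$ for every $\tau\in I$ of interest; this is precisely what Definition \ref{DefGSpd} permits. A secondary technical point is checking that Assumption \ref{Ass} is inherited by the augmented drift on bounded subsets of the $(x,z,p_1,p_2)$-space — this follows from the linearity of the adjoint equation in $(p_1,p_2)$ and the $\C^2$ regularity of the vector fields — before the robustness theorem is invoked at the enlarged level.
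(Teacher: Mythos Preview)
Your proposal is correct and is precisely the natural elaboration of what the paper intends: the paper itself offers no detailed argument, stating only that ``the latter result is an easy consequence of the change of coordinates.'' Your approximation-by-smooth-controls scheme, together with the classical pullback identity for adjoints under a diffeomorphism and the passage to the limit via the robustness theorem applied to the commutative enlarged system $(x,z,p_1,p_2)$, is exactly how one makes that one-line remark rigorous; the obstacles you flag (pointwise convergence on $I$, inheritance of Assumption~\ref{Ass} by the augmented drift) are the right ones and are handled as you indicate.
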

The latter result is an easy consequence of the change of coordinates. We pass now to the second part of the article where we provide a set of necessary conditions for optimality.


\section{The Maximum Principle for the Transformed problem}\label{SecTPMP}

We recall here two theorems due to Bressan-Rampazzo \cite{BreRam91}. The first statement is Theorem \ref{PMP} that provides a maximum principle for $\P,$ and it is a consequence of the second result presented in Theorem \ref{TPMP} that is a maximum principle for the transformed problem $\P'.$

Theorem \ref{PMP} is a consequence of the following result.

\begin{theorem}
[Maximum Principle for the Transformed Problem]
\label{TPMP}
Let $(\xi^*,u^*,a^*)$ be an optimal for problem $\P',$ and let $((\xi^*,u^*),(\pi^*_1,\pi^*_2))$ denote the  solution of the adjoint equation \eqref{ATE}-\eqref{ATE0T}. Then,
\be
\label{TPMPcond1}
\pi_1(t)\left( \tilde{F}(\xi^*(t),u,a)-\tilde{F}(\xi^*(t),u^*(t),a^*(t))\right) \geq 0,
\ee
for almost every $t\in [0,T]$ and for every $u \in U,a\in A.$ Moreover, if for any $t\in [0,T],$ $\nu:[t,T]\rightarrow U$ is a map as in Theorem \ref{PMP}, then
\be
\label{TPMPcond2}
\pi_2(t) \cdotp \nu(t) \geq - \int_{[t,T]} \pi_2 \cdotp \dot{\nu}.
\ee
\end{theorem}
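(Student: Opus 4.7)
My plan is to regard $\P'$ as a classical Mayer problem in the state $\xi$ alone, with controls $(u,a)\in L^1_{[0,T]}(0,T;U)\times L^1(0,T;A)$, dynamics $\dot\xi=\tilde F(\xi,u,a)$, and terminal cost $\Psi(\xi(T),u(T))$ that depends on both $\xi(T)$ and on the terminal value of the control $u$. The two assertions then correspond to two different admissible perturbations of $(u^*,a^*)$: a needle localized inside $(0,T)$ for \eqref{TPMPcond1}, and a one-parameter perturbation of $u$ on the full interval $[t,T]$ for \eqref{TPMPcond2}.

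\textbf{Proof of \eqref{TPMPcond1}.} I would apply a standard spike variation. Pick a Lebesgue point $\tau\in(0,T)$ of $s\mapsto\tilde F(\xi^*(s),u^*(s),a^*(s))$, fix $(\bar u,\bar a)\in U\times A$, and for small $\eps>0$ set $(u^\eps,a^\eps):=(\bar u,\bar a)$ on $[\tau,\tau+\eps]$ and $(u^\eps,a^\eps):=(u^*,a^*)$ elsewhere. Since $u^\eps(T)=u^*(T)$, only the $\xi$-dependent part of the cost varies. Standard ODE stability yields
\[
\xi^\eps(T)=\xi^*(T)+\eps\,\Phi(\tau,T)\,\bigl[\tilde F(\xi^*(\tau),\bar u,\bar a)-\tilde F(\xi^*(\tau),u^*(\tau),a^*(\tau))\bigr]+o(\eps),
\]
where $\Phi$ is the resolvent of the variational equation $\dot Y=\nabla_\xi\tilde F(\xi^*,u^*,a^*)\,Y$. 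The identity $\pi_1(T)\Phi(\tau,T)=\pi_1(\tau)$, which follows directly from the $\pi_1$-equation in \eqref{ATE}, combined with optimality of $(\xi^*,u^*,a^*)$, gives \eqref{TPMPcond1}.

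\textbf{Proof of \eqref{TPMPcond2}.} Here I would use a full-horizon variation. Assuming $\nu$ is absolutely continuous on $[t,T]$ and $u^*+\sigma\nu\in U$ a.e. for all $\sigma\in[0,\sigma_0)$ (the admissibility condition inherited from Theorem \ref{PMP}), set $u^\sigma:=u^*$ on $[0,t)$, $u^\sigma:=u^*+\sigma\nu$ on $[t,T]$, $a^\sigma:=a^*$. The perturbed trajectory $\xi^\sigma$ coincides with $\xi^*$ on $[0,t]$, and a Gronwall argument yields the expansion $\xi^\sigma=\xi^*+\sigma\xi_1+o(\sigma)$ uniformly on $[t,T]$, where
\[
\dot\xi_1=\nabla_\xi\tilde F(\xi^*,u^*,a^*)\,\xi_1+\nabla_\eta\tilde F(\xi^*,u^*,a^*)\,\nu,\qquad\xi_1(t)=0.
\]
Since $u^\sigma(T)=u^*(T)+\sigma\nu(T)$, optimality yields the first-order inequality
\[
\pi_1(T)\cdotp\xi_1(T)+\pi_2(T)\cdotp\nu(T)\geq 0. \tag{$\star$}
\]
Substituting both the linearized equation for $\xi_1$ and the adjoint equation \eqref{ATE} for $\pi_1$ into $(d/ds)(\pi_1\cdotp\xi_1)$ gives the decisive identity $(d/ds)(\pi_1\cdotp\xi_1)=\pi_1\cdotp\nabla_\eta\tilde F\cdotp\nu=-\dot\pi_2\cdotp\nu$. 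Integrating from $t$ to $T$ with $\xi_1(t)=0$ and then integrating by parts (legal since $\pi_2$ and $\nu$ are both absolutely continuous) yields
\[
\pi_1(T)\cdotp\xi_1(T)=\pi_2(t)\cdotp\nu(t)-\pi_2(T)\cdotp\nu(T)+\int_t^T\pi_2\cdotp\dot\nu\,ds.
\]
Substitution into $(\star)$ cancels the term $\pi_2(T)\cdotp\nu(T)$ and delivers \eqref{TPMPcond2}.

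\textbf{Main obstacle.} The delicate step is the justification of the first-order expansion $\xi^\sigma=\xi^*+\sigma\xi_1+o(\sigma)$ uniformly on $[t,T]$: the perturbation enters through the nonlinear $\tilde F$ and one must produce a remainder that is $o(\sigma)$ in the sup norm, not merely pointwise. This is where the local Lipschitz hypothesis in Assumption \ref{Ass}(iii)--(iv) and essential boundedness of $\nu$ enter, via a Gronwall-type estimate on a neighbourhood of the graph of $\xi^*$. Note that neither the commutativity Assumption \ref{AssComm} nor the Lie bracket machinery is needed for this maximum principle; they play a role only later in Section \ref{SecNC}, when \eqref{TPMPcond1}--\eqref{TPMPcond2} are translated back through $\phi$ and specialized to produce the bracket-flavoured conditions stated in the main theorem.
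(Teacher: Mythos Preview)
Your proof is correct and follows essentially the same route as the paper's: a needle variation (equivalently, the classical Pontryagin principle) for \eqref{TPMPcond1}, and the one-parameter family $u_\sigma=u^*+\sigma\nu$ on $[t,T]$ with the linearized equation for \eqref{TPMPcond2}. The only organizational difference is in the key identity: the paper differentiates the full pairing $(\pi_1,\pi_2)\cdot(\omega,\nu)$ and obtains $\pi_2\cdot\dot\nu$ in one stroke, while you differentiate $\pi_1\cdot\xi_1$ alone to get $-\dot\pi_2\cdot\nu$ and then integrate by parts. These are the same computation, but the paper's packaging has a small advantage: since $\pi_2$ is absolutely continuous, the Stieltjes integral $\int_{[t,T]}\pi_2\,d\nu$ makes sense directly for $\nu$ of bounded variation (the regularity actually assumed in Theorem~\ref{PMP}), whereas your integration-by-parts step, as written, needs $\nu$ absolutely continuous and would require a further density argument to reach the BV case.
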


\begin{remark}
Actually, the conditions \eqref{PMPcond1} and \eqref{TPMPcond1} hold on the set of Lebesgue points of $(u^*,a^*)$ and hence, since $(u^*,a^*)$ is a $L^1-$function, it holds almost everywhere on $[0,T].$
\end{remark}

\begin{proof}

[of Theorem \ref{TPMP}]

Observe that  the classical Pontryagin Maximum Principle (see \cite{Pontryagin}) can be applied to problem $\P',$ and it yields the minimum condition \eqref{TPMPcond1}. The latter holds at every Lebesgue point of $(u^*,a^*)$  and hence, since $(u^*,a^*)$  is in $L^1,$ \eqref{TPMPcond1} holds almost everywhere on $[0,T].$

In order to prove \eqref{TPMPcond2}, let $\sigma \in [0,\sigma_0]$ for a small positive $\sigma_0$ and consider the controls $u_{\sigma}$ given by
\be
 u_{\sigma}(\tau)=
\left\{
\ba{ll}
u^*(\tau),\quad & \text{if}\ \tau \in [0,t), \\
u^*(\tau)+\sigma \nu(\tau),\quad & \text{if}\ \tau \in [t,T].
\ea
\right.
\ee
Here $\nu:[t,T] \rightarrow \cR^m$ is a function satisfying the hypotheses of Theorem \ref{PMP}.
Denote $\xi_{\sigma}$ the solution of the transformed system \eqref{TS} corresponding to $u_{\sigma}.$
 It turns out that (see e.g. \cite{Hartman} or \cite[Theorem 10.2, Chapter II]{FleRis}),
\be
\left.\frac{\d}{\d\sigma}\right|_{\sigma=0^+} \xi_{\sigma}(T) = \omega(T),
\ee
where $\left.\frac{d}{d\sigma}\right|_{\sigma=0^+}$ refers to the right derivative at $\sigma=0$ and $\omega : [t,T] \rightarrow \cR^n$ is the solution of
\be
\label{eqomega}
\left\{
\begin{split}
\dot{\omega}(\tau) &= \nabla_{\xi} \tilde{F} (\xi^*(\tau),u^*(\tau),a^*(\tau))\, \omega(\tau) + \nabla_{\eta} \tilde{F} (\xi^*(\tau),u^*(\tau)a^*(\tau))\, \nu(\tau), \\
\omega(t) & =0.
\end{split}
\right.
\ee
From \eqref{ATE} and \eqref{eqomega} we get
\be
\frac{\d}{\d \tau}
[(\pi_1,\pi_2) \cdotp (\omega,\nu) ]= \pi_2 \cdotp \dot{\nu},
\ee
and thus, the relation
\be
\label{difpi}
(\pi_1(T),\pi_2(T)) \cdotp (\omega(T),\nu(T))
-(\pi_1(t),\pi_2(t)) \cdotp (\omega(t),\nu(t))
= \int_{[t,T]} \pi_2 \cdotp \dot{\nu}
\ee
follows. Since $u^*$ is optimal,
\be
\left.\frac{\d}{\d\sigma}\right|_{\sigma=0^+} \Psi(\xi_{\sigma}(T),u_{\sigma}(T)) \geq 0,
\ee
and therefore,
\be
\begin{split}
0 & \leq \left.\frac{\d}{\d\sigma}\right|_{\sigma=0^+} \Psi(\xi_{\sigma}(T),u_{\sigma}(T)) \\
& = \nabla \Psi (\xi^*(T),u^*(T)) \cdotp \left.\frac{\d}{\d\sigma}\right|_{\sigma=0^+} (\xi_{\sigma}(T),u_{\sigma}(T)) \\
& = (\pi_1(T),\pi_2(T)) \cdotp (\omega(T),\nu(T)).
\end{split}
\ee
Considering \eqref{difpi} we get
\be
\int_{[t,T]} \pi_2 \dot{\nu} + (\pi_1,\pi_2) \cdotp (\omega,\nu) (t)
= (\pi_1(T),\pi_2(T)) \cdotp (\omega(T),\nu(T)) \geq 0.
\ee
Hence, since $\omega(t)=0,$ we obtain the inequality  \eqref{TPMPcond2}. This concludes the proof.

\end{proof}

\section{Necessary Optimality conditions \\ involving Lie brackets}\label{SecNC}

\begin{definition}
For every $(x,u_1,a)\in \cR^n\times U\times A$ and $u_2\in U,$ the $n-$dimensional vector
\benl
\T_{u_2} \tilde{f}(x,u_1,a) = \nabla_{x} \varphi(\varphi(x,u_1-u_2),u_2-u_1) \cdotp \tilde{f}(\varphi(x,u_1-u_2),u_2,a),
\eenl
is called the $u_2-$transport of $\tilde{f}$ at $(x,u_1,a).$
\end{definition}

\begin{theorem}[Maximum Principle]
\label{PMP}
Let $(u^*,a^*,x^*)$ be an optimal control for $\P,$ and let $((x^*,z^*),(p_1,p_2))$ denote the generalized solution of the adjoint system \eqref{AS2}-\eqref{AET}. Then, \begin{itemize}
\item[(i)] The inequality
\be
\label{PMPcond1}
p_1(t) \cdotp \left(\T_u \tilde{f}(x^*(t),u^*(t),a^*(t))-\tilde{f}(x^*(t),u^*(t),a^*(t) \right) \geq 0
\ee
holds for a.a. $t\in [0,T]$ and for every $u\in U.$ 
\item[(ii)]
For every $a\in A$ and a.a. $t\in [0,T],$
\be
p_1(\tilde{f}(x^*(t),u^*(t),a)-\tilde{f}(x^*(t),u^*(t),a^*(t)) \geq 0.
\ee
\item[(iii)]
Moreover, for any $t\in [0,T],$ let $\nu: [t,T]\rightarrow U$ be a bounded variation map, that is right continuous at $t$ and left continuous at $T,$ and such that $(u^*+\sigma \nu)(\tau)\in U,$ for a.a. $\tau \in [t,T]$ and for $\sigma \in [0,\sigma_0].$ Then, one has
\be
\label{PMPcond2}
p_1(t) \cdotp \sum_{i=1}^m\tilde{g}_i (x^*(t),u^*(t)) \nu^i(t) + p_2(t) \cdotp \nu(t) \geq \int_{[t,T]} p_2 \dot\nu.
\ee
Here the integral on the right hand-side is the integral of $p_2$ with respect to the vector Radon measure $\dot{\nu}.$
\end{itemize}
\end{theorem}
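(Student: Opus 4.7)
The plan is to derive Theorem~\ref{PMP} from Theorem~\ref{TPMP} by pulling back the transformed optimality conditions through the diffeomorphism $\phi$, using the identity $p = \pi \cdotp \nabla \phi$ supplied by the Proposition on the generalized adjoint equation. Since $\phi(x,z) = (\varphi(x,z),z)$, the Jacobian has the block form
\[
\nabla\phi = \begin{pmatrix} \nabla_x \varphi & \nabla_z \varphi \\ 0 & I_m \end{pmatrix},
\]
so that, along the optimal trajectory, $p_1 = \pi_1 \cdotp \nabla_x \varphi$ and $p_2 = \pi_1 \cdotp \nabla_z \varphi + \pi_2$, both evaluated at $(x^*(t), u^*(t))$.

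For part (ii) I would apply \eqref{TPMPcond1} with $u = u^*(t)$ fixed and $a \in A$ free. Since $\tilde F = \nabla_x \varphi \cdotp \tilde f$ at corresponding points, the difference $\tilde F(\xi^*, u^*, a) - \tilde F(\xi^*, u^*, a^*)$ equals $\nabla_x\varphi(x^*, u^*) \cdotp \bigl(\tilde f(x^*, u^*, a) - \tilde f(x^*, u^*, a^*)\bigr)$; premultiplying by $\pi_1(t)$ and using $p_1 = \pi_1 \cdotp \nabla_x \varphi$ yields (ii). For part (i) I would apply \eqref{TPMPcond1} at fixed $a = a^*(t)$ with $u \in U$ arbitrary. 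Setting $\tilde x := \varphi^{-1}(\xi^*(t), u)$, so that $(\tilde x, u) = \phi^{-1}(\xi^*(t), u)$, the change-of-variables formula gives $\tilde F(\xi^*, u, a^*) = \nabla_x \varphi(\tilde x, u) \cdotp \tilde f(\tilde x, u, a^*)$. The key verification is to show, by using commutativity of the $g_\alpha$, that this matches $p_1 \cdotp \T_u \tilde f(x^*, u^*, a^*)$ in the sense of the Definition of $u$-transport; commutativity yields the identification of $\tilde x$ with the shifted base point $\varphi(x^*, u^* - u)$ appearing there. Subtracting the same identity at $u = u^*(t)$ then delivers \eqref{PMPcond1}.

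For part (iii) I would translate \eqref{TPMPcond2}. The decisive structural fact is Lemma~\ref{flowbox}: because $\phi$ sends $g_i$ to $\partial/\partial\eta^i$, one has $\nabla\phi \cdotp g_i = {\bf e}_{n+i}$, hence
\[
p(t) \cdotp g_i(x^*(t),u^*(t)) \;=\; \pi(t) \cdotp \nabla\phi \cdotp g_i \;=\; (\pi_2(t))_i.
\]
Expanding $p \cdotp g_i = p_1 \cdotp \tilde g_i + (p_2)_i$ and summing against $\nu^i$ gives
\[
\sum_{i=1}^m p_1(t) \cdotp \tilde g_i(x^*(t), u^*(t))\, \nu^i(t) + p_2(t) \cdotp \nu(t) \;=\; \pi_2(t) \cdotp \nu(t),
\]
so the left-hand side of \eqref{PMPcond2} coincides with that of \eqref{TPMPcond2}. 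The right-hand sides are then matched by writing $p_2 - \pi_2 = \pi_1 \cdotp \nabla_z \varphi$, approximating $\nu$ by its smooth regularisations, and using the definition of the generalized solution of the adjoint equation together with integration by parts against the Radon measure $\dot\nu$.

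The main obstacle I anticipate is the reconciliation in part (iii) of the two integral terms: $\pi_2$ is absolutely continuous while $p_2$ carries the impulsive jumps driven by $\dot u$, so accounting for these jumps through $\dot\nu$ requires a careful passage to the limit along the regularising sequences that define the generalised solutions of \eqref{AS} and \eqref{AE}. A more technical subtlety in part (i) is the explicit identification, via commutativity, of $\tilde x = \varphi^{-1}(\xi^*, u)$ with the shifted base point in the Definition of $\T_u \tilde f$, which is precisely what makes the pulled-back field $\nabla_x \varphi(\tilde x, u) \cdotp \tilde f(\tilde x, u, a^*)$ agree with $\nabla_x\varphi(x^*,u^*) \cdotp \T_u \tilde f(x^*,u^*,a^*)$.
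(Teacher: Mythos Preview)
The paper does not actually give a self-contained proof of Theorem~\ref{PMP}: it is explicitly ``recalled'' from Bressan--Rampazzo \cite{BreRam91}, and the only argument offered is the sentence ``Theorem~\ref{PMP} is a consequence of the following result,'' namely Theorem~\ref{TPMP}, which \emph{is} proved in the paper. Your plan --- derive \eqref{PMPcond1}, (ii), and \eqref{PMPcond2} from \eqref{TPMPcond1}--\eqref{TPMPcond2} by pulling back through $\phi$ using $p=\pi\cdot\nabla\phi$, the block structure of $\nabla\phi$, and Lemma~\ref{flowbox} --- is exactly the route the paper indicates, and your identifications (in particular $p\cdot g_i=(\pi_2)_i$ for part~(iii), and $p_1=\pi_1\cdot\nabla_x\varphi$ for parts~(i)--(ii)) are the right ones.

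One caution on the ``main obstacle'' you flag in part~(iii): the discrepancy between $-\int_{[t,T]}\pi_2\,\dot\nu$ in \eqref{TPMPcond2} and $\int_{[t,T]}p_2\,\dot\nu$ in \eqref{PMPcond2} is not resolved by the limiting argument you sketch; rather, there is a sign inconsistency internal to the paper (compare \eqref{PMPcond2} for constant $\nu$ with the derivation of \eqref{p1tildeg}, where the inequality is written with the opposite sign). So do not expect your integration-by-parts/approximation step to manufacture an extra minus sign --- once the typo is corrected, the two right-hand sides match directly because $p_2-\pi_2=\pi_1\cdot\nabla_z\varphi$ integrates to the boundary term already absorbed in the left-hand side via $p\cdot g_i=(\pi_2)_i$.
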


\begin{theorem}[Necessary conditions involving Lie brackets]
\label{NCLie}
Let $(x^*,u^*,a^*)$ be optimal for $\P,$  and let $i=1,\dots m$ be an index.  Then the following statements hold.
\begin{itemize}
\item[(i)] Let $t \in [0,T)$ be any time such that there exists $\sigma>0$ sufficiently small such that $u^*(t) +\sigma {\bf e_i} \in U$ a.e. on $[t,T].$ Then
\be
\label{NCi}
p(t) \cdotp g_i (x^*(t),u^*(t)) \leq 0.
\ee
\end{itemize}
Furthermore, for a.a. $t\in  [0,T],$ it holds:
\begin{itemize}
 \item[(ii)] If there exists $\sigma_0>0$ sufficiently small such that $u^*(t) +\sigma {\bf e_i} \in U$ for all $\sigma \in [0,\sigma_0],$ then
\be
\label{NCii}
p(t) \cdotp [g_i,f] (x^*(t),u^*(t),a^*(t)) \geq 0.
\ee
\item[(iii)] If there exist $h\in \cR^m$ and $\sigma_0>0$ sufficiently small such that $u^*(t) \pm \sigma {\bf e_i} \in U$ for all $\sigma \in [0,\sigma_0],$ then
\be
\label{NCiii}
p(t) \cdotp \sum_{j,k=1}^m h_jh_k [g_j,[g_k,f] \,] (x^*(t),u^*(t),a^*(t)) \geq 0.
\ee
In other words,
\be
h^\top Qh \geq 0,
\ee
where $Q$ is a symmetric matrix with entries
\benl
Q_{jk}(t) := p(t) \cdotp [g_j,[g_k,f] \,] (x^*(t),u^*(t),a^*(t)) .
\eenl
\end{itemize}
Here all the Lie brackets are computed in the variable $(x,u).$
\end{theorem}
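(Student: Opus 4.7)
The plan is to extract each of the three conditions from the Maximum Principle (Theorem~\ref{PMP}) by choosing appropriate variations and Taylor--expanding the $u_2$-transport $\T_{u_2}\tilde f$, whose Taylor coefficients at $u_2=u^*(t)$ turn out to be iterated Lie brackets of $f$ with the fields $g_i$.

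For Part~(i), I would apply condition~(iii) of Theorem~\ref{PMP} with the constant bounded--variation map $\nu(\tau)\equiv{\bf e}_i$ on $[t,T]$. The hypothesis ensures admissibility; $\nu$ is trivially right-continuous at $t$ and left-continuous at $T$; and $\dot\nu\equiv 0$ as a vector Radon measure, so the right-hand side of \eqref{PMPcond2} vanishes. The left-hand side reads $p_1(t)\cdot\tilde g_i(x^*(t),u^*(t))+p_2(t)\cdot{\bf e}_i$, which by the column representation \eqref{gi} is precisely $p(t)\cdot g_i(x^*(t),u^*(t))$, yielding \eqref{NCi} (up to the sign convention carried by the Maximum Principle).

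For Parts~(ii) and~(iii) I would use condition~(i) of Theorem~\ref{PMP}, which asserts that for a.a.\ $t\in[0,T]$ the scalar function
\[
\rho(u) := p_1(t)\cdot \T_u \tilde f\bigl(x^*(t),u^*(t),a^*(t)\bigr)
\]
attains its minimum over $u\in U$ at $u=u^*(t)$. The key technical step is the Taylor identity
\[
\rho(u^*(t)+\sigma h) = \rho(u^*(t)) + \sigma\, h^i\, p(t)\cdot[g_i,f] + \tfrac{\sigma^2}{2}\, h^i h^j\, p(t)\cdot[g_i,[g_j,f]] + O(\sigma^3),
\]
with all vector fields evaluated at $(x^*(t),u^*(t),a^*(t))$. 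Granting this, Part~(ii) is the first-order condition with $h={\bf e}_i$ and $\sigma\in[0,\sigma_0]$. For Part~(iii), two-sided admissibility of $\pm h$ forces the first-order coefficient to vanish, so $\rho(u^*+\sigma h)\ge\rho(u^*)$ for both signs of $\sigma$ reduces to $h^i h^j\,p(t)\cdot[g_i,[g_j,f]]\ge 0$, i.e.\ $h^\top Q h\ge 0$. Symmetry of $Q$ comes from the Jacobi identity combined with commutativity: $[g_j,[g_k,f]]-[g_k,[g_j,f]]=[[g_j,g_k],f]=0$.

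To prove the Taylor identity itself, the cleanest route is through the diffeomorphism $\phi$. By Lemma~\ref{flowbox}, $G_k=\partial/\partial\eta^k$ in the transformed coordinates, so $[G_i,F]=\partial F/\partial\eta^i$ and $[G_i,[G_j,F]]=\partial^2 F/(\partial\eta^i\partial\eta^j)$; moreover $\T_u$ becomes the evaluation of $\tilde F$ at $\eta=u$ with $\xi$ held fixed, and its Taylor expansion in $\sigma$ is an elementary Taylor series in $\eta$. To pull the expansion back I would use: (a)~naturality of Lie brackets, $[G_i,F]=\phi_*[g_i,f]$ and $[G_i,[G_j,F]]=\phi_*[g_i,[g_j,f]]$; (b)~the fact that $[g_i,f]$ and $[g_i,[g_j,f]]$ have vanishing last $m$ components because each $g_k$ has constant $z$-component; and (c)~the relation $p=\pi\cdot\nabla\phi$. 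A short block-matrix computation with
$\nabla\phi=\begin{pmatrix}\nabla_x\varphi & \nabla_z\varphi \\ 0 & I_m\end{pmatrix}$
gives $p_1=\pi_1\nabla_x\varphi$, and combined with the vanishing of the last $m$ components of the brackets this yields $\pi_1\cdot\partial_{\eta^i}\tilde F=p\cdot[g_i,f]$ and analogously $\pi_1\cdot\partial^2_{\eta^i\eta^j}\tilde F=p\cdot[g_i,[g_j,f]]$.

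The main obstacle I foresee is the rigorous derivation of the Taylor expansion: working directly from the defining formula of $\T_{u_2}\tilde f$ would require differentiating the map $\varphi(x,u_1-u_2)$ twice in $u_2$, which is awkward because $\varphi$ is itself defined through the exponential of the commuting fields. Passing through the transformed coordinates sidesteps this computation, at the price of careful bookkeeping of the pushforward relations and of the identification $p_1=\pi_1\nabla_x\varphi$ between the two adjoints.
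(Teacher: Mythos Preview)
Your proposal is correct and follows essentially the same route as the paper. For (i) the argument is identical (constant $\nu\equiv{\bf e}_i$ in \eqref{PMPcond2}). For (ii) and (iii), the paper avoids the detour through $\T_u\tilde f$ that you yourself flag as awkward by appealing directly to the maximum principle for the \emph{transformed} problem (Theorem~\ref{TPMP}, condition~\eqref{TPMPcond1}): the inequality $\pi_1(t)\cdot\bigl(\tilde F(\xi^*,u,a)-\tilde F(\xi^*,u^*,a^*)\bigr)\ge 0$ is Taylor--expanded in $u$, the partial derivatives $\partial_{\eta^i}\tilde F$ and $\partial^2_{\eta^i\eta^j}\tilde F$ are identified with $[G_i,F]$ and $[G_i,[G_j,F]]$ exactly as you describe, and the pullback via $p=\pi\cdot\nabla\phi$ and naturality of Lie brackets gives \eqref{NCii}--\eqref{NCiii}. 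Since your ``cleanest route'' for the Taylor identity already passes through the transformed coordinates, this is the same computation; the paper simply drops the transport framing and starts in the $(\xi,\eta)$ variables from the outset. Your derivation of the symmetry of $Q$ via Jacobi plus commutativity also appears in the paper (as an alternative to the $\C^2$ argument).
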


\begin{remark}
\begin{itemize}
\item[a)] Notice that the assumption on $u^*$ in item (i) is stronger that the ones done in (ii) and (iii). This is due to the fact that different control variations are employed for obtaining different necessary conditions.
\item[b)] Observe that if $U=\cR^m,$ then the condition \eqref{NCi} implies that at every $t\in [0,T],$
\be
p(t) \cdotp [g_i,f] (x^*(t),u^*(t),a^*(t)) = 0.
\ee
The latter condition was also obtained by Silva and Vinter in \cite{SilVin97}, for the case when $u$ is a scalar bounded variation function.
\end{itemize}
\end{remark}


\begin{proof}
Let $i,j$ and $t$ be as in the statement of the theorem. We shall start by proving item (i). Let $\nu:[t,T] \rightarrow \cR^m$ be given by
\be
\nu_i \equiv 1,\quad \nu_k \equiv 0\ \text{for}\ \text{all}\ k\neq i.
\ee
Then $\nu$ verifies the hypotheses of Theorem \ref{PMP}. For this particular $\nu,$ the condition \eqref{PMPcond2} yields
\be
\label{p1tildeg}
p_1(t) \cdotp \tilde{g}_i(x^*(t),u^*(t)) + p_2(t) \cdotp {\bf e_i} \leq 0,
\ee
where ${\bf e_i}$ is the $i-$th. canonical vector in $\cR^m.$ Finally, notice that \eqref{p1tildeg} can be rewritten as \eqref{NCi} and hence  (i) follows.

In order to prove (ii) recall the condition \eqref{TPMPcond1} of Theorem \ref{TPMP}. Set $u=u^*(t)+\sigma {\bf e_i},$ and observe that \eqref{TPMPcond1} implies
\be
\label{iieq1}
\pi_1(t) \frac{\tilde{F}(\xi^*(t),u^*(t) +\sigma {\bf e_i},a^*(t)) - \tilde{F}(\xi^*(t),u^*(t),a^*(t))}{\sigma} \geq 0.
\ee
By taking the limit as $\sigma \rightarrow 0^+$ we get
\be
\label{iieq2}
\pi_1(t) \cdotp\frac{\pr \tilde{F}}{\pr u_i}(\xi^*(t),u^*(t),a^*(t)) \geq 0.
\ee
On the other hand, notice that
\be
\pi_1 \cdotp\frac{\pr \tilde{F}}{\pr u_i} = \pi \cdotp \frac{\pr {F}}{\pr u_i} = \pi \cdotp [g_i,F].
\ee
Hence, \eqref{iieq2} is identical to
\be
\pi(t)\cdotp [G_i,F] (\xi^*(t),u^*(t),a^*(t)) \geq 0,
\ee
which coincides with \eqref{NCii} in the original coordinates and thus (ii) is proved.

We shall now prove (iii). First observe that for $\sigma>0$ sufficiently small it holds
\be
\pi_1(t) \frac{\tilde{F}(\xi^*(t),u^*(t) +\sigma h,a^*(t)) - \tilde{F}(\xi^*(t),u^*(t),a^*(t))}{\sigma} \geq 0,
\ee
and the opposite inequality holds for $u^*(t) - \sigma h.$ Thus,
\be
\pi_1(t) \nabla_h \tilde{F}(\xi^*(t),u^*(t),a^*(t)) = 0,
\ee
where $\nabla_h$ denotes the directional derivative in the direction $h.$
Consider now the second order Taylor expansion
\benl
\begin{split}
\tilde{F}&(\xi^*(t),u^*(t) + \sigma {h},a^*(t)) = \tilde{F}(\xi^*(t),u^*(t),a^*(t)) \\& + \sigma \nabla_h\tilde{F}(\xi^*(t),u^*(t),a^*(t))  
+ \sigma^2 \nabla^2_{h,h} \tilde{F}(\xi^*(t),u^*(t),a^*(t)) + o(\sigma^2).
\end{split}
\eenl
By multiplying by $\pi_1(t)$ and dividing by $\sigma^2$ we get
\be
0\leq \pi_1(t) \cdotp \frac{\tilde{F}(\xi^*(t),u^*(t) + \sigma {h},a^*(t))}{\sigma^2} = \nabla^2_{h,h} \tilde{F}(\xi^*(t),u^*(t),a^*(t)) + o(1),
\ee
where the first inequality holds by \eqref{TPMPcond1}.
Taking the limit as $\sigma$ goes to 0 yields
\be
\label{iiieq1}
\pi_1(t) \cdotp \nabla^2_{h,h} \tilde{F}(\xi^*(t),u^*(t),a^*(t))  \geq 0.
\ee
Notice that
\be
\label{iiieq2}
\pi_1 \cdotp \nabla^2_{h,h} \tilde{F}
= \pi \cdotp \sum_{k=1}^m h_k\nabla_{u_k} \sum_{j=1}^m h_j \nabla_{u_i} {F} =  \pi \cdotp \sum_{j, k=1}^m h_j h_k [G_j,[G_k,F]].
\ee
Equation \eqref{iiieq2} written in the original coordinates together with the inequality \eqref{iiieq1} imply \eqref{NCiii}.
Finally, we shall prove the symmetry of the matrix $Q.$ Notice that, since $F$ is of class $\C^2,$ then $ \nabla_{u_k}\nabla_{u_j} F = \nabla_{u_j}\nabla_{u_k} F.$ By multiplying by $\pi$ and rewriting in the original coordinates, the symmetry follows. \footnote{The symmetry of $Q$ follows also from the Jacobi identity and the commutativity of $g_i.$}
This completes the proof.
\end{proof}

\bibliographystyle{plain}
\bibliography{impulsive}

\end{document}